\begin{document}
\title[ Positive solutions ]{Existence of positive solutions for a
three-point integral boundary-value problem}
\author[F. Haddouchi, S. Benaicha]{Faouzi Haddouchi, Slimane Benaicha}
\address{Faouzi Haddouchi\\
Department of Physics, University of Sciences and Technology of
Oran, El Mnaouar, BP 1505, 31000 Oran, Algeria}
\email{fhaddouchi@gmail.com}
\address{Slimane Benaicha \\
Department of Mathematics, University of Oran, Es-senia, 31000 Oran,
Algeria} \email{slimanebenaicha@yahoo.fr}
\subjclass[2000]{34B15, 34C25, 34B18}
\keywords{Positive solutions; Krasnoselskii's  fixed point theorem; Three-point boundary value problems; Cone.}

\begin{abstract}
In this paper, by using the Krasnoselskii's fixed-point theorem, we study the existence of at
least one or two positive solutions to the three-point integral boundary value problem
\begin{equation*} \label{eq-1}
\begin{gathered}
{u^{\prime \prime }}(t)+a(t)f(u(t))=0,\ 0<t<T, \\
u(0)={\beta}u(\eta),\ u(T)={\alpha}\int_{0}^{\eta}u(s)ds,
\end{gathered}
\end{equation*}
where $0<{\eta}<T$, $0<{\alpha}< \frac{2T}{{\eta}^{2}}$,
$0\leq{\beta}<\frac{2T-\alpha\eta^{2}}{\alpha\eta^{2}-2\eta+2T}$ are
given constants.
\end{abstract}

\maketitle \numberwithin{equation}{section}
\newtheorem{theorem}{Theorem}[section]
\newtheorem{lemma}[theorem]{Lemma} \newtheorem{proposition}[theorem]{%
Proposition} \newtheorem{corollary}[theorem]{Corollary} \newtheorem{remark}[%
theorem]{Remark}
\newtheorem{exmp}{Example}[section]

\section{Introduction}

We are interested in the existence of positive solutions of the following three-point integral
boundary value problem (BVP):

\begin{equation} \label{eq-2}
{u^{\prime \prime }}(t)+a(t)f(u(t))=0,\ t\in(0,T),
\end{equation}
\begin{equation} \label{eq-3}
u(0)={\beta}u(\eta),\ u(T)={\alpha}\int_{0}^{\eta}u(s)ds,
\end{equation}
where $0<{\eta}<T$ and $0<{\alpha}<
\frac{2T}{{\eta}^{2}}$,
$0\leq{\beta}<\frac{2T-\alpha\eta^{2}}{\alpha\eta^{2}-2\eta+2T}$,
and
\begin{itemize}
\item[(B1)] $f\in C([0,\infty),[0,\infty))$; \item[(B2)] $a\in
C([0,T],[0,\infty))$ and there exists $t_{0}\in[\eta,T]$ such that
$a(t_{0})>0$.
\end{itemize}

Set
\begin{equation} \label{eq-4}
f_{0}=\lim_{u\rightarrow0^{+}}\frac{f(u)}{u}, \
f_{\infty}=\lim_{u\rightarrow\infty}\frac{f(u)}{u}.
\end{equation}

The study of the existence of solutions of multi-point boundary value
problems for linear second-order ordinary differential equations was
initiated by II'in and Moiseev \cite{Ilin}.
Then Gupta \cite{Gupt} studied three-point boundary value problems for nonlinear
second-order ordinary differential equations. Since then, the existence of positive
solutions for nonlinear second order three-point boundary-value problems has been
studied by many authors by using the fixed point theorem, nonlinear alternative of the Leray-Schauder
approach, or coincidence degree theory. We refer the reader to \cite{Cheng,Guo,Han,Li,Pang,Sun1,Sun2,Webb1,Webb2,Webb3,Webb4,Webb5,Webb6,Webb7,Xu,Ander1,Ander2,He,Ma1,Ma2,Ma3,Ma4,Ma5,Feng,Maran,Luo, Liang1,Liu1,Liu2} and the references therein.

Tariboon and Sitthiwirattham \cite{Tarib} proved the existence of at least one positive solution on the condition that $f$ is either
superlinear or sublinear for the following BVP
\begin{equation} \label{eq-5}
{u^{\prime \prime }}(t)+a(t)f(u(t))=0,\ t\in(0,1),
\end{equation}
\begin{equation} \label{eq-6}
u(0)=0,\ u(1)={\alpha}\int_{0}^{\eta}u(s)ds,
\end{equation}
where $0<{\eta}<1$ and $0<{\alpha}<
\frac{2}{{\eta}^{2}}$, $f\in C([0,\infty),[0,\infty))$, $a\in C([0,1],[0,\infty))$ and there exists $t_{0}\in[\eta,1]$ such that
$a(t_{0})>0$. Very recently, Haddouchi and Benaicha \cite{Haddou}, investigated the following three-point BVP

\begin{equation} \label{eq-7}
{u^{\prime \prime }}(t)+a(t)f(u(t))=0,\ t\in(0,T),
\end{equation}
\begin{equation} \label{eq-8}
u(0)={\beta}u(\eta),\ u(T)={\alpha}\int_{0}^{\eta}u(s)ds,
\end{equation}
where $0<{\eta}<T$ and $0<{\alpha}<
\frac{2T}{{\eta}^{2}}$,
$0\leq{\beta}<\frac{2T-\alpha\eta^{2}}{\alpha\eta^{2}-2\eta+2T}$,  $f\in C([0,\infty),[0,\infty))$, $a\in C([0,T],[0,\infty))$ and there exists $t_{0}\in[\eta,T]$ such that
$a(t_{0})>0$, and improved the results in \cite{Tarib}.

In \cite{Haddou}, the authors used the Krasnoselskii's theorem to prove the following
result:
\begin{theorem}[See \cite{Haddou}]\label{theo 1.1}
Assume {\rm (B1)} and {\rm (B2)} hold, and $0<{\alpha}<
\frac{2T}{{\eta}^{2}}$,
$0\leq{\beta}<\frac{2T-\alpha\eta^{2}}{\alpha\eta^{2}-2\eta+2T}$.
If either
\begin{itemize}
\item[(D1)] $f_{0}=0$ and $f_{\infty}=\infty$ ($f$ is
superlinear), or \item[(D2)]$f_{0}=\infty$ and $f_{\infty}=0$ ($f$
is sublinear)
\end{itemize}
then problem \eqref{eq-7}, \eqref{eq-8} has at least one positive
solution.
\end{theorem}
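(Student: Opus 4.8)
The plan is to recast the boundary value problem \eqref{eq-7}, \eqref{eq-8} as a fixed-point problem for a completely continuous operator on a suitable cone, and then to invoke Krasnoselskii's compression/expansion theorem. First I would solve the associated linear problem: for $y\in C[0,T]$, the unique solution of $-u''=y(t)$ subject to \eqref{eq-8} is found by writing $u(t)=-\int_0^t(t-s)y(s)\,ds+c_1+c_2 t$, substituting into $u(0)=\beta u(\eta)$ and $u(T)=\alpha\int_0^\eta u(s)\,ds$, using the identity $\int_0^\eta\int_0^s(s-r)y(r)\,dr\,ds=\tfrac12\int_0^\eta(\eta-s)^2 y(s)\,ds$ to simplify the second condition, and solving the resulting $2\times2$ linear system for $c_1,c_2$. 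This yields a representation
\[
u(t)=\int_0^T G(t,s)\,y(s)\,ds ,
\]
with $G$ an explicit Green's function whose normalizing factor is a determinant built from $T,\eta,\alpha,\beta$; the role of the hypotheses $0<\alpha<\tfrac{2T}{\eta^2}$ and $0\le\beta<\tfrac{2T-\alpha\eta^2}{\alpha\eta^2-2\eta+2T}$ is precisely to make this factor strictly positive and to force $G(t,s)\ge 0$ on $[0,T]\times[0,T]$. Solving \eqref{eq-7}, \eqref{eq-8} is then equivalent to finding a fixed point of $Au(t):=\int_0^T G(t,s)a(s)f(u(s))\,ds$.

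Next I would extract from the piecewise-linear-in-$t$ structure of $G$ (equivalently, from concavity of solutions) a Harnack-type inequality: there exist a subinterval $[\theta_1,\theta_2]\subset(0,T)$, chosen so that $[\eta,T]$-behaviour of $a$ is available via (B2), and a constant $\gamma\in(0,1)$ with $G(t,s)\ge\gamma\,\sup_{\tau\in[0,T]}G(\tau,s)$ for all $t\in[\theta_1,\theta_2]$, $s\in[0,T]$. I then define the cone
\[
K=\Big\{\,u\in C[0,T]:u\ge 0,\ \min_{t\in[\theta_1,\theta_2]}u(t)\ge\gamma\|u\|\,\Big\},\qquad \|u\|=\max_{t\in[0,T]}|u(t)| .
\]
Using (B1), (B2), nonnegativity of $G$ and the Harnack inequality one checks $A(K)\subseteq K$, while continuity of $G$ and boundedness of $f$ on bounded sets give equicontinuity of $\{Au\}$, so by Arzel\`a--Ascoli $A:K\to K$ is completely continuous.

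Finally I would apply Krasnoselskii's theorem with $\Omega_r=\{u:\|u\|<r\}$. In case (D1): from $f_0=0$ there is $r_1>0$ with $f(u)\le\varepsilon u$ on $[0,r_1]$ for $\varepsilon$ small enough that $\varepsilon\max_t\int_0^T G(t,s)a(s)\,ds\le 1$, giving $\|Au\|\le\|u\|$ on $K\cap\partial\Omega_{r_1}$; from $f_\infty=\infty$ there is $M$ large and $r_2>r_1$ so that for $u\in K\cap\partial\Omega_{r_2}$ we have $u(t)\ge\gamma r_2$ on $[\theta_1,\theta_2]$, hence $f(u(t))\ge Mu(t)$ there, with $M$ chosen so that $M\gamma\min_{t\in[\theta_1,\theta_2]}\int_{\theta_1}^{\theta_2}G(t,s)a(s)\,ds\ge 1$ (positive by (B2)), giving $\|Au\|\ge\|u\|$ on $K\cap\partial\Omega_{r_2}$. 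Krasnoselskii's theorem then produces a fixed point $u\in K$ with $r_1\le\|u\|\le r_2$, which is a positive solution; case (D2) is the symmetric argument with the roles of $0$ and $\infty$ interchanged.

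The main obstacle I anticipate is the bookkeeping in constructing $G$ and, more delicately, checking that the stated bound on $\beta$ is exactly what guarantees both positivity of the normalizing determinant and $G(t,s)\ge 0$ on all of $[0,T]^2$; obtaining a clean Harnack constant $\gamma$ together with an interval $[\theta_1,\theta_2]$ on which $\int_{\theta_1}^{\theta_2}G(\cdot,s)a(s)\,ds>0$ (so that (B2) is genuinely usable) also requires care. Once these estimates are established, the compression/expansion argument is routine.
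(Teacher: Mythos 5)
Your proposal is correct and follows essentially the same route as the paper and its cited source \cite{Haddou}: solve the linear problem explicitly (the paper writes the kernel out as a sum of integrals in Lemma \ref{lem 2.1} rather than packaging it as a single Green's function), prove nonnegativity and the Harnack-type bound $\min_{t\in[\eta,T]}u(t)\ge\gamma\|u\|$ (Lemmas \ref{lem 2.2} and \ref{lem 2.4}, with the concrete interval $[\eta,T]$ and the explicit $\gamma$ of \eqref{eq-17} in place of your generic $[\theta_{1},\theta_{2}]$), form the cone $K$ of \eqref{eq-19}, and apply Krasnoselskii's theorem with the standard $f_{0}$/$f_{\infty}$ estimates. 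The only point to adjust is that the subinterval should be taken as $[\eta,T]$ itself rather than a compact subset of $(0,T)$, since (B2) only guarantees $a(t_{0})>0$ for some $t_{0}\in[\eta,T]$ and the integral boundary condition controls $u(T)$ from below, so that $\int_{\eta}^{T}(T-s)a(s)\,ds>0$ and the expansion estimate is nonvacuous.
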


Liu \cite{Liu3} used the fixed-point index theorem to prove the existence of at least one or two positive solutions to the three-point boundary value problem BVP
\begin{equation} \label{eq-9}
{u^{\prime \prime }}(t)+a(t)f(u(t))=0,\ t\in(0,1),
\end{equation}
\begin{equation} \label{eq-10}
u(0)=0,\ u(1)=\beta u(\eta),
\end{equation}
where $0<{\eta}<1$ and $0<\beta<\frac{1}{\eta}$.

Recently, Liang et al.\cite{Liang2}, investigated the following three-point BVP

\begin{equation} \label{eq-11}
{u^{\prime \prime }}(t)+a(t)f(u(t))=0,\ t\in(0,T),
\end{equation}
\begin{equation} \label{eq-12}
u(0)=\beta u(\eta),\ u(T)=\alpha u(\eta),
\end{equation}
where $0<{\eta}<T$, $0<\alpha<\frac{T}{\eta}$, $0<\beta<\frac{T-\alpha\eta}{T-\eta}$ are given constants, and obtained some simple criterions for the existence of at least one or two positive solutions by applying the Krasnoselskii's fixed point theorem under certain conditions on $f$.

Motivated by the results of \cite{Haddou,Liu3,Liang2} the aim of this paper is to establish some results for the existence of positive solutions of the BVP \eqref{eq-2}, \eqref{eq-3}, under $f_{0}=f_{\infty}=\infty$ or $f_{0}=f_{\infty}=0$. We also obtain some existence results for positive
solutions of the BVP \eqref{eq-2}, \eqref{eq-3} under $f_{0}, f_{\infty}\not\in \left\{0,\infty\right\}$. Finally, we give some examples to illustrate our results.

The key tool in our approach is the
following Krasnoselskii's fixed point theorem in a cone \cite{Krasn}.

\begin{theorem}[\cite{Krasn}]\label{theo 1.2}
Let $E$ be a Banach space, and let $K\subset E$ be a cone. Assume
$\Omega_{1}$, $\Omega _{2}$ are open bounded subsets of $E$ with $0\in \Omega _{1}$%
, $\overline{\Omega }_{1}\subset \Omega _{2}$, and let

\begin{equation*}\label{eq-13}
A: K\cap (\overline{\Omega }_{2}\backslash  \Omega
_{1})\longrightarrow K
\end{equation*}

be a completely continuous operator such that either

(i) $\ \left\Vert Au\right\Vert \leq \left\Vert u\right\Vert $, $\
u\in K\cap \partial \Omega _{1}$, \ and $\left\Vert Au\right\Vert
\geq \left\Vert u\right\Vert $, $\ u\in K\cap \partial \Omega _{2}$;
or

(ii) $\left\Vert Au\right\Vert \geq \left\Vert u\right\Vert $, $\
u\in K\cap
\partial \Omega _{1}$, \ and $\left\Vert Au\right\Vert \leq \left\Vert
u\right\Vert $, $\ u\in K\cap \partial \Omega _{2}$

hold. Then $A$ has a fixed point in $K\cap
(\overline{\Omega}_{2}\backslash $ $\Omega _{1})$.
\end{theorem}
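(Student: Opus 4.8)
The plan is to derive the theorem from the theory of the fixed point index $i(A,K\cap U,K)$ for completely continuous maps $A\colon K\cap\overline U\to K$ (with $U$ open bounded and $A$ fixed-point-free on $K\cap\partial U$), so I first record the tools I will use: normalization (the index of a constant map taking a value inside $K\cap U$ equals $1$), homotopy invariance along admissible completely continuous homotopies, additivity/excision across an interior boundary, and the solution property (a nonzero index forces a fixed point in $K\cap U$). To get started I may assume that $A$ has no fixed point on $K\cap\partial\Omega_1$ or on $K\cap\partial\Omega_2$, since otherwise the theorem is already proved. Because $A$ has relatively compact range, that range lies in a compact convex subset $C\subset K$, so a Dugundji-type extension produces a completely continuous $\widetilde A\colon K\cap\overline{\Omega}_2\to C\subset K$ agreeing with $A$ on $K\cap(\overline{\Omega}_2\setminus\Omega_1)$. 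Then $\widetilde A$ is fixed-point-free on $K\cap\partial\Omega_1$ and on $K\cap\partial\Omega_2$, its fixed points in $K\cap(\Omega_2\setminus\overline{\Omega}_1)$ are exactly those of $A$, and the indices $i(\widetilde A,K\cap\Omega_1,K)$ and $i(\widetilde A,K\cap\Omega_2,K)$ are defined.

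The core of the proof is the evaluation of those two indices, for which I would establish two lemmas. First, if $\|\widetilde A u\|\le\|u\|$ for every $u\in K\cap\partial\Omega_j$ (and $\widetilde A$ has no fixed point there), then $i(\widetilde A,K\cap\Omega_j,K)=1$: the homotopy $h(t,u)=t\,\widetilde A u$, $t\in[0,1]$, is admissible, since $t\,\widetilde A u=u$ with $u\in K\cap\partial\Omega_j$ gives $\|u\|=t\|\widetilde A u\|\le t\|u\|$ and, as $0$ is interior to $\Omega_j$ so that $\|u\|>0$, this forces $t=1$ and $\widetilde A u=u$, a contradiction; hence $h$ joins $\widetilde A$ to the zero map, whose index is $1$ by normalization. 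Second, if $\|\widetilde A u\|\ge\|u\|$ for every $u\in K\cap\partial\Omega_j$ (and no fixed point there), then $i(\widetilde A,K\cap\Omega_j,K)=0$: this is the classical cone-index lemma, proved by deforming $\widetilde A$ through $h(t,u)=\widetilde A u+t e$ (with $e\in K$, $\|e\|=1$) until the translate is so large that no fixed point can persist on $K\cap\partial\Omega_j$, the inequality $\|\widetilde A u\|\ge\|u\|$ being precisely what makes this homotopy admissible; I would import the careful verification from Krasnoselskii~\cite{Krasn} or a standard reference. I expect this second evaluation—together with the clean construction of the cone index itself—to be the main obstacle; everything else is routine bookkeeping.

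Finally I would combine. In case (i) the two lemmas give $i(\widetilde A,K\cap\Omega_1,K)=1$ and $i(\widetilde A,K\cap\Omega_2,K)=0$, whence additivity across $K\cap\partial\Omega_1$ yields
\[
i\big(\widetilde A,K\cap(\Omega_2\setminus\overline{\Omega}_1),K\big)=i(\widetilde A,K\cap\Omega_2,K)-i(\widetilde A,K\cap\Omega_1,K)=-1\ne 0 ,
\]
so the solution property gives a fixed point of $\widetilde A$—hence of $A$—in $K\cap(\Omega_2\setminus\overline{\Omega}_1)\subset K\cap(\overline{\Omega}_2\setminus\Omega_1)$. In case (ii) the inequalities on $K\cap\partial\Omega_1$ and on $K\cap\partial\Omega_2$ are reversed, so the same reasoning gives $i(\widetilde A,K\cap(\Omega_2\setminus\overline{\Omega}_1),K)=1\ne 0$ and the identical conclusion. (An index-free alternative is Krasnoselskii's original device: if $A$ had no fixed point in the annular region, one uses the map $u\mapsto u-Au$ to manufacture a retraction of $K\cap(\overline{\Omega}_2\setminus\Omega_1)$ onto one of its bounding spheres, which is impossible; but making this rigorous in the present setting needs essentially the same machinery, so I would prefer the index argument.)
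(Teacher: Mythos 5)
This statement is Krasnoselskii's cone compression/expansion theorem, which the paper quotes from \cite{Krasn} as a ready-made tool and does not prove, so there is no in-paper argument to compare against; your task here is really to supply the classical proof. Your fixed-point-index route is the standard modern one (it is how the result is proved in Guo--Lakshmikantham, Amann, Deimling), and the architecture is right: extend $A$ to $K\cap\overline{\Omega}_2$ by a Dugundji extension into the closed convex hull of the range, compute $i(\widetilde A,K\cap\Omega_1,K)$ and $i(\widetilde A,K\cap\Omega_2,K)$, and use additivity plus the solution property on the annulus. Your first lemma is verified correctly: the homotopy $h(t,u)=t\widetilde Au$ is admissible because $\|u\|\geq\operatorname{dist}(0,\partial\Omega_j)>0$, and normalization gives index $1$.

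The one soft spot is your second lemma. As written, the parenthetical claim that ``the inequality $\|\widetilde Au\|\geq\|u\|$ is precisely what makes the homotopy $h(t,u)=\widetilde Au+te$ admissible'' is not correct: from $u=\widetilde Au+te$ with $t>0$ one gets $\widetilde Au=u-te$, and in a general Banach space with a general (possibly non-normal) cone there is no reason why $\|u-te\|\geq\|u\|$ should be contradictory, so the translation homotopy is not admissible on the strength of the norm inequality alone. The standard repair is to first observe that $\|\widetilde Au\|\geq\|u\|>0$ together with $\widetilde Au\neq u$ on $K\cap\partial\Omega_j$ rules out $\widetilde Au=\mu u$ for all $\mu\in(0,1]$; one then runs the expansion homotopy $(1+t\lambda)\widetilde Au$ to a map whose image of $K\cap\partial\Omega_j$ has norm exceeding $\sup_{K\cap\partial\Omega_j}\|u\|$, and only then applies the translation argument (or invokes the lemma ``$\inf_{K\cap\partial\Omega}\|Au\|>0$ and $Au\neq\mu u$ for $\mu\in(0,1]$ imply index $0$''). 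Since you explicitly flag this lemma as the main obstacle and propose to import its verification from \cite{Krasn} or a standard reference, the plan is salvageable and the rest (the index count $0-1=-1\neq0$ in case (i), $1-0=1\neq0$ in case (ii), and the solution property) is routine and correct; but the admissibility justification you sketched for that lemma would not survive being written out.
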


\section{Preliminaries}
To prove the main existence results we will employ several
straightforward lemmas.

\begin{lemma}[See \cite{Haddou}]\label{lem 2.1}
Let $\beta \neq \frac{2T-\alpha \eta ^{2}}{\alpha \eta ^{2}-2\eta
+2T}$. Then for $y\in C([0,T],\mathbb{R})$, the problem
\begin{equation}\label{eq-14}
{u^{\prime \prime }}(t)+y(t)=0, \ t\in (0,T),
\end{equation}
\begin{equation}\label{eq-15}
u(0)=\beta u(\eta ), \ u(T)=\alpha \int_{0}^{\eta }u(s)ds
\end{equation}
has a unique solution
\begin{eqnarray*}
u(t)&=&\frac{\beta (2T-\alpha \eta ^{2})-2\beta (1-\alpha \eta
)t}{(\alpha \eta ^{2}-2T)-\beta (2\eta -\alpha \eta
^{2}-2T)}\int_{0}^{\eta }(\eta -s)y(s)ds \\
&&+\frac{\alpha \beta
\eta -\alpha (\beta -1)t}{(\alpha \eta ^{2}-2T)-\beta (2\eta -\alpha
\eta
^{2}-2T)}\int_{0}^{\eta }(\eta -s)^{2}y(s)ds \\
&&+\frac{2(\beta-1)t-2\beta \eta }{(\alpha \eta
^{2}-2T)-\beta (2\eta -\alpha \eta ^{2}-2T)}\int_{0}^{T}(T-s)y(s)ds-%
\int_{0}^{t}(t-s)y(s)ds.
\end{eqnarray*}
\end{lemma}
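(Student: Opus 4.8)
The plan is to solve the linear boundary value problem \eqref{eq-14}, \eqref{eq-15} directly by two integrations and then impose the boundary conditions to pin down the two constants of integration. First I would integrate ${u''(t)=-y(t)}$ twice to obtain the general solution
\begin{equation*}
u(t)=-\int_{0}^{t}(t-s)y(s)\,ds+c_{1}t+c_{2},
\end{equation*}
where I have written the particular solution in the convolution form $\int_{0}^{t}(t-s)y(s)\,ds$ (which satisfies the ODE with zero initial data), and $c_{1},c_{2}$ are arbitrary real constants. From this I record the three needed evaluations: $u(0)=c_{2}$, $u(\eta)=-\int_{0}^{\eta}(\eta-s)y(s)\,ds+c_{1}\eta+c_{2}$, $u(T)=-\int_{0}^{T}(T-s)y(s)\,ds+c_{1}T+c_{2}$, and also $\int_{0}^{\eta}u(s)\,ds=-\int_{0}^{\eta}\frac{(\eta-s)^{2}}{2}y(s)\,ds+c_{1}\frac{\eta^{2}}{2}+c_{2}\eta$, the last obtained by interchanging the order of integration in $\int_{0}^{\eta}\!\int_{0}^{s}(s-r)y(r)\,dr\,ds$.

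Next I would substitute these into the two boundary conditions $u(0)=\beta u(\eta)$ and $u(T)=\alpha\int_{0}^{\eta}u(s)\,ds$. The first gives one linear equation in $c_{1},c_{2}$ and the integrals $\int_{0}^{\eta}(\eta-s)y(s)\,ds$; the second gives a second linear equation also involving $\int_{0}^{T}(T-s)y(s)\,ds$ and $\int_{0}^{\eta}(\eta-s)^{2}y(s)\,ds$. This is a $2\times2$ linear system for $(c_{1},c_{2})$ whose coefficient matrix has determinant proportional to $(\alpha\eta^{2}-2T)-\beta(2\eta-\alpha\eta^{2}-2T)$; the hypothesis $\beta\neq\frac{2T-\alpha\eta^{2}}{\alpha\eta^{2}-2\eta+2T}$ is precisely the statement that this determinant is nonzero, so the system has a unique solution by Cramer's rule. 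I would then solve for $c_{1}$ and $c_{2}$ explicitly and plug them back into $u(t)=-\int_{0}^{t}(t-s)y(s)\,ds+c_{1}t+c_{2}$, collecting the coefficient of each of the three integrals $\int_{0}^{\eta}(\eta-s)y(s)\,ds$, $\int_{0}^{\eta}(\eta-s)^{2}y(s)\,ds$, $\int_{0}^{T}(T-s)y(s)\,ds$ to match the claimed formula.

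The main obstacle is not conceptual but purely computational bookkeeping: one must carry the three integral terms as formal unknowns through the elimination and make sure the coefficients simplify to exactly the stated rational functions of $t$ over the common denominator $(\alpha\eta^{2}-2T)-\beta(2\eta-\alpha\eta^{2}-2T)$. I would organize the algebra by first solving for $c_{2}=\beta u(\eta)$ in terms of $c_{1}$ and the integrals, substituting into the second equation to get $c_{1}$ alone, and then back-substituting; keeping the denominator factored throughout avoids sign errors. Finally, since this lemma is quoted from \cite{Haddou}, it suffices to verify the formula, which can also be done in reverse: one checks directly that the displayed $u(t)$ satisfies $u''(t)=-y(t)$ (differentiate twice, using $\frac{d^{2}}{dt^{2}}\int_{0}^{t}(t-s)y(s)\,ds=y(t)$ while the other three terms are linear in $t$ and hence have vanishing second derivative) and that it meets the two boundary conditions \eqref{eq-15}; uniqueness then follows because the difference of two solutions solves \eqref{eq-14}, \eqref{eq-15} with $y\equiv0$, forcing it to be an affine function $c_{1}t+c_{2}$ whose two boundary conditions give a homogeneous system with nonzero determinant, hence $c_{1}=c_{2}=0$.
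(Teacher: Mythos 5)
Your proposal is correct and follows essentially the same route as the cited source: integrate twice to get $u(t)=-\int_{0}^{t}(t-s)y(s)\,ds+c_{1}t+c_{2}$, impose the two boundary conditions to obtain a $2\times 2$ system whose determinant is $\tfrac{1}{2}\bigl[(\alpha\eta^{2}-2T)-\beta(2\eta-\alpha\eta^{2}-2T)\bigr]$, and solve by Cramer's rule; the hypothesis on $\beta$ is exactly the nonvanishing of this determinant, and the resulting coefficients of the three integrals match the stated formula. Your remark that one can alternatively verify the formula directly and settle uniqueness via the homogeneous problem is a valid shortcut, but no genuinely different method is involved.
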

\begin{lemma}[See \cite{Haddou}]\label{lem 2.2}
Let $0<\alpha <\frac{2T}{\eta ^{2}}$, $\ 0\leq \beta <%
\frac{2T-\alpha \eta ^{2}}{\alpha \eta ^{2}-2\eta +2T}$. If $y\in C([0,T],[0,\infty
))$, then the unique solution $u$ of problem
\eqref{eq-14}, \eqref{eq-15} satisfies $\ u(t)\geq 0$ for $t\in [0,T]$.
\end{lemma}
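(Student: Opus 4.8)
The plan is to start from the explicit solution formula in Lemma~\ref{lem 2.1} and show that, under the stated restrictions on $\alpha$ and $\beta$, every term is nonnegative whenever $y\ge 0$. First I would record the sign of the common denominator $D:=(\alpha\eta^2-2T)-\beta(2\eta-\alpha\eta^2-2T)$. Since $0<\alpha<2T/\eta^2$ we have $\alpha\eta^2-2T<0$; and the hypothesis $\beta<\frac{2T-\alpha\eta^2}{\alpha\eta^2-2\eta+2T}$ together with $0\le\beta$ is precisely what forces $\beta(2\eta-\alpha\eta^2-2T)>\alpha\eta^2-2T$, i.e. $D<0$. (One should check the denominator $\alpha\eta^2-2\eta+2T=\alpha\eta^2+2(T-\eta)>0$ so the inequality defining the range of $\beta$ is not flipped.) So I would fix the sign $D<0$ once and for all; then it suffices to prove that each of the four numerators, viewed as a function of $t\in[0,T]$ multiplied against its integral, is $\le 0$ (the last term $-\int_0^t(t-s)y(s)\,ds$ is already $\le 0$).

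Next I would handle the three integral terms one at a time. Each integrand $\int_0^\eta(\eta-s)y(s)\,ds$, $\int_0^\eta(\eta-s)^2y(s)\,ds$, $\int_0^T(T-s)y(s)\,ds$ is $\ge 0$ because $y\ge0$ and $\eta-s\ge0$ on $[0,\eta]$, $T-s\ge0$ on $[0,T]$. So the sign of each term is controlled by the sign of its coefficient (a linear function of $t$) divided by $D<0$; equivalently, I must show each coefficient's linear-in-$t$ numerator is $\ge 0$ on $[0,T]$. For the first coefficient, $\beta(2T-\alpha\eta^2)-2\beta(1-\alpha\eta)t$: it is affine in $t$, so it is $\ge0$ on $[0,T]$ iff it is $\ge0$ at the endpoints $t=0$ and $t=T$. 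At $t=0$ it equals $\beta(2T-\alpha\eta^2)\ge0$; at $t=T$ it equals $\beta(2T-\alpha\eta^2-2T+2\alpha\eta T)=\beta\alpha\eta(2T-\eta)\ge0$ since $\eta<T$. Similarly for the second coefficient $\alpha\beta\eta-\alpha(\beta-1)t$: at $t=0$ it is $\alpha\beta\eta\ge0$, at $t=T$ it is $\alpha(\beta\eta-(\beta-1)T)=\alpha(\beta(\eta-T)+T)$, which is $\ge0$ iff $\beta\le T/(T-\eta)$; this needs to be derived from the assumed bound on $\beta$. For the third coefficient $2(\beta-1)t-2\beta\eta$ I would check it is $\le0$ on $[0,T]$ (so that, divided by $D<0$, the term is $\ge0$): at $t=0$ it is $-2\beta\eta\le0$; at $t=T$ it is $2((\beta-1)T-\beta\eta)=2(\beta(T-\eta)-T)$, which is $\le0$ iff $\beta\le T/(T-\eta)$, the same condition again.

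The main obstacle, then, is the bookkeeping inequality $\beta\le \frac{T}{T-\eta}$, which must follow from the hypothesis $\beta<\frac{2T-\alpha\eta^2}{\alpha\eta^2-2\eta+2T}$. I would prove $\frac{2T-\alpha\eta^2}{\alpha\eta^2-2\eta+2T}\le\frac{T}{T-\eta}$ directly: cross-multiplying (both denominators are positive, as noted) this is $(2T-\alpha\eta^2)(T-\eta)\le T(\alpha\eta^2-2\eta+2T)$, i.e. $2T^2-2T\eta-\alpha\eta^2 T+\alpha\eta^3\le \alpha\eta^2 T-2T\eta+2T^2$, which simplifies to $\alpha\eta^3\le 2\alpha\eta^2 T$, i.e. $\eta\le 2T$ — true since $\eta<T$. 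Hence $\beta<\frac{T}{T-\eta}$ and all the endpoint checks go through. Putting the pieces together: with $D<0$, the first two integral terms are $(\text{nonneg linear in }t)\times(\text{nonneg integral})/D\le 0$ contributions to a quantity whose reciprocal sign we are tracking — more cleanly, I would multiply the whole formula through by $D<0$ and show $Du(t)\le 0$ termwise for all $t\in[0,T]$, whence $u(t)\ge0$. I would also note the boundary cases: at $t=0$, $u(0)=\beta u(\eta)$ and at $t=T$, $u(T)=\alpha\int_0^\eta u(s)\,ds$ are automatically consistent with $u\ge0$, providing a sanity check but not needed for the argument. This completes the proof.
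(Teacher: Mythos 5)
Your proposed termwise sign analysis does not close, and the gap is visible inside your own write-up. You correctly establish $D<0$ and you correctly check that the first coefficient $\beta(2T-\alpha\eta^2)-2\beta(1-\alpha\eta)t$ and the second coefficient $\alpha\beta\eta-\alpha(\beta-1)t$ are \emph{nonnegative} on $[0,T]$, while the third, $2(\beta-1)t-2\beta\eta$, is nonpositive. But then the four terms of $Du(t)=N_1(t)I_1+N_2(t)I_2+N_3(t)I_3-D\int_0^t(t-s)y(s)\,ds$ have signs $\geq 0$, $\geq 0$, $\leq 0$, $\geq 0$ respectively (the last because $-D>0$), so the claim ``$Du(t)\le 0$ termwise'' is false for three of the four terms. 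Equivalently, back in $u(t)$ itself the first, second and fourth terms are $\le 0$ and only the third is $\ge 0$; you can check this concretely already in the case $\beta=0$, where $u(t)=\frac{2t}{2T-\alpha\eta^2}\int_0^T(T-s)y\,ds-\frac{\alpha t}{2T-\alpha\eta^2}\int_0^\eta(\eta-s)^2y\,ds-\int_0^t(t-s)y\,ds$ is a difference of nonnegative quantities. The nonnegativity of $u$ is therefore a cancellation statement, not a termwise one, and no rearrangement of signs of the individual coefficients can deliver it. (Your auxiliary computations --- $D<0$, and $\beta<\frac{2T-\alpha\eta^2}{\alpha\eta^2-2\eta+2T}\le\frac{T}{T-\eta}$ --- are correct, but they are not enough.)

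The lemma is quoted from \cite{Haddou} and the argument there is of a different nature: since $u''=-y\le 0$, the solution $u$ is concave on $[0,T]$, and one then uses the two boundary conditions $u(0)=\beta u(\eta)$ and $u(T)=\alpha\int_0^\eta u(s)\,ds$ together with the restrictions $\alpha\eta^2<2T$ and $0\le\beta<\frac{2T-\alpha\eta^2}{\alpha\eta^2-2\eta+2T}$ to rule out, by contradiction, the possibility that a concave function satisfying them takes a negative value (comparing $u$ with the chord through appropriate points and estimating $\int_0^\eta u$ from concavity). If you want to salvage a direct computation from the representation formula, you would have to regroup the integrals so that all resulting terms are nonnegative --- this is exactly what the paper does at the single point $t=\eta$ in the proof of Theorem \ref{theo 3.1}, by splitting $\int_0^T=\int_0^\eta+\int_\eta^T$ and recombining --- but carrying that out for every $t\in[0,T]$ is substantially more work than the concavity argument and is not what your proposal does.
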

\begin{remark} In view of Lemma 2.3 of  \cite{Haddou}, if $\ \alpha >\frac{2T}{\eta ^{2}}$, $\beta\geq0$ and $y\in C([0,T],[0,\infty
))$, then problem \eqref{eq-14}, \eqref{eq-15} has no positive solution. Hence, in this paper, we assume that $\ \alpha {\eta ^{2}}<2T$ and $\ 0\leq\beta <\frac{2T-\alpha \eta ^{2}}{\alpha \eta ^{2}-2\eta +2T}$ .
\end{remark}
\begin{lemma}[See \cite{Haddou}]\label{lem 2.4}
Let $0<\alpha <\frac{2T}{\eta ^{2}}$, $\ 0\leq\beta <%
\frac{2T-\alpha \eta ^{2}}{\alpha \eta ^{2}-2\eta +2T}$. If $y\in
C([0,T],[0,\infty ))$, then the unique solution $u$ of the problem
\eqref{eq-14}, \eqref{eq-15} satisfies
\begin{equation} \label{eq-16}
\min_{t\in [\eta,T]}u(t)\geq \gamma \|u\|,\ \|u\|=\max_{t\in
[0,T]}|u(t)|,
\end{equation}
where
\begin{equation} \label{eq-17}
\gamma:=\min\left\{\frac{\eta}{T},
\frac{\alpha(\beta+1)\eta^{2}}{2T},
\frac{\alpha(\beta+1)\eta(T-\eta)}{2T-\alpha(\beta+1)\eta^{2}}\right\}\in\left(0,1\right).
\end{equation}
\end{lemma}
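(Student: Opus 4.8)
The plan is to establish the two-sided estimate \eqref{eq-16} directly from the explicit formula for $u$ in Lemma \ref{lem 2.1}, using the fact that $y\geq 0$ so that $u$ is concave on $[0,T]$ (since $u''=-y\leq 0$) and nonnegative (by Lemma \ref{lem 2.2}). First I would record the sign information: under the standing hypotheses $0<\alpha\eta^{2}<2T$ and $0\leq\beta<\frac{2T-\alpha\eta^{2}}{\alpha\eta^{2}-2\eta+2T}$, the denominator $D:=(\alpha\eta^{2}-2T)-\beta(2\eta-\alpha\eta^{2}-2T)$ appearing throughout is negative, and $u(0)=\beta u(\eta)\geq 0$, $u(T)=\alpha\int_0^\eta u(s)\,ds\geq 0$. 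Concavity then gives that on $[0,T]$ the graph of $u$ lies above the chord joining $(0,u(0))$ and $(T,u(T))$, and also that $u$ is unimodal, so $\|u\|$ is attained either at an interior maximum or at an endpoint.

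Next I would locate where $\|u\|$ is attained and bound $\min_{t\in[\eta,T]}u(t)$ from below in each case. Since $u$ is concave and nonnegative on $[0,T]$, for $t\in[\eta,T]$ one has $u(t)\geq\frac{T-t}{T-\eta}u(\eta)+\frac{t-\eta}{T-\eta}u(T)\geq\min\{u(\eta),u(T)\}$, so it suffices to bound $u(\eta)$ and $u(T)$ below by a fixed multiple of $\|u\|$. For $u(T)$: concavity on $[0,T]$ gives $u(s)\geq\frac{s}{T}u(T)+\frac{T-s}{T}u(0)\geq\frac{s}{T}u(T)$ for $s\in[0,\eta]$ when the max is at $T$; more usefully, if the maximum is attained at some $\sigma\in[0,T]$, then $u(s)\geq\frac{\sigma-s}{\sigma}u(0)+\frac{s}{\sigma}\|u\|\geq\frac{s}{\sigma}\|u\|\geq\frac{s}{T}\|u\|$ for $s\le\sigma$ and $u(s)\geq\frac{T-s}{T-\sigma}\|u\|$ for $s\ge\sigma$, from which $u(T)=\alpha\int_0^\eta u(s)\,ds\geq\alpha\|u\|\int_0^\eta(\text{linear lower bound})\,ds$. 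Treating the subcases $\sigma\geq\eta$ and $\sigma\le\eta$ separately, the integral $\int_0^\eta u(s)\,ds$ is at least $\frac{\eta^2}{2T}\|u\|$ in the first case and at least (after a short computation using the two linear pieces) a comparable quantity in the second; combined with $u(0)=\beta u(\eta)$ this yields $u(\eta)\geq\frac{1}{\beta+1}u(0)+\frac{\beta}{\beta+1}u(\eta)$-type bookkeeping giving $u(\eta)\geq\gamma_1\|u\|$ and $u(T)\geq\gamma_2\|u\|$ for explicit constants; the three quantities in the definition \eqref{eq-17} of $\gamma$ are exactly what emerge from these case distinctions (the endpoint-at-$T$, endpoint-at-$0$ via $\beta$, and interior-maximum cases).

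An alternative, cleaner route—and the one I would actually follow to avoid juggling the maximum location—is to go through the Green's function. Rewrite the solution from Lemma \ref{lem 2.1} as $u(t)=\int_0^T G(t,s)y(s)\,ds$ where $G$ is obtained by collecting the four integral terms; then \eqref{eq-16} follows from the standard Harnack-type inequality $G(t,s)\geq\gamma\, G(\tau,s)$ for $t\in[\eta,T]$, all $\tau\in[0,T]$, $s\in[0,T]$, with $\gamma$ the minimum in \eqref{eq-17}. To get this one shows $\min_{t\in[\eta,T]}G(t,s)\geq\gamma\max_{t\in[0,T]}G(t,s)$ pointwise in $s$, which reduces to elementary one-variable estimates on the piecewise-linear-in-$t$ function $s\mapsto G(\cdot,s)$. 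Integrating against $y\geq 0$ and taking $\max_{\tau}$ inside gives $\min_{t\in[\eta,T]}u(t)\geq\gamma\max_{\tau\in[0,T]}u(\tau)=\gamma\|u\|$.

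The main obstacle is the bookkeeping in isolating the correct constant $\gamma$: the three expressions in \eqref{eq-17} come from genuinely different extremal configurations (maximum at $t=T$, the contribution of the term carrying $\beta$ controlling behavior near $t=0$, and an interior maximum in $(\eta,T)$ interacting with the averaging operator $\alpha\int_0^\eta$), and one must check that each candidate bound is actually achieved in its regime so that the minimum of the three is both necessary and sufficient. Verifying $\gamma\in(0,1)$ requires checking $\alpha(\beta+1)\eta^2<2T$, which is equivalent to the standing bound $\beta<\frac{2T-\alpha\eta^2}{\alpha\eta^2}$ and is implied by the hypothesis $\beta<\frac{2T-\alpha\eta^2}{\alpha\eta^2-2\eta+2T}$ together with $0<\eta<T$; I would dispatch this inequality first so that the third entry of $\gamma$ is well-defined and positive.
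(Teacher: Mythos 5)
Your first route is the same strategy as the proof in \cite{Haddou} to which the paper delegates this lemma: $u''=-y\le 0$ makes $u$ concave, $u\ge 0$ by Lemma \ref{lem 2.2}, hence $\min_{t\in[\eta,T]}u(t)=\min\{u(\eta),u(T)\}$, and one then does a case analysis on the location $\sigma$ of the maximum, feeding chord estimates into the boundary conditions $u(0)=\beta u(\eta)$ and $u(T)=\alpha\int_0^\eta u(s)\,ds$. Your preliminary observations are all sound: the denominator is negative under the standing hypotheses, and $\gamma\in(0,1)$ amounts to $\alpha(\beta+1)\eta^2<2T$, which indeed follows from $\beta<\frac{2T-\alpha\eta^2}{\alpha\eta^2-2\eta+2T}$ because $\alpha\eta^2\le\alpha\eta^2-2\eta+2T$ when $\eta\le T$.

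The gap is that the quantitative core of the lemma --- actually producing the three constants in \eqref{eq-17} --- is never carried out, and the one concrete step you offer in its place does not work as written. Dropping $u(0)\ge 0$ and using only $u(s)\ge\frac{s}{T}\|u\|$ on $[0,\eta]$ gives $u(T)\ge\frac{\alpha\eta^2}{2T}\|u\|$, which is strictly weaker than the required $\frac{\alpha(\beta+1)\eta^2}{2T}\|u\|$ whenever $\beta>0$; the factor $(\beta+1)$ must be extracted by retaining the $u(0)=\beta u(\eta)$ term in the chord estimate and then relating $u(\eta)$ back to $\|u\|$. The bookkeeping inequality you propose for this, $u(\eta)\ge\frac{1}{\beta+1}u(0)+\frac{\beta}{\beta+1}u(\eta)$, reduces upon substituting $u(0)=\beta u(\eta)$ to $u(\eta)\ge\frac{2\beta}{\beta+1}u(\eta)$, which fails for $\beta>1$ --- and $\beta>1$ is admissible here (e.g.\ Example 5.4 allows $\beta$ up to $7/5$). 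The third constant $\frac{\alpha(\beta+1)\eta(T-\eta)}{2T-\alpha(\beta+1)\eta^2}$ is only asserted to ``emerge.'' The alternative Green's-function route does not repair this: the pointwise inequality $\min_{t\in[\eta,T]}G(t,s)\ge\gamma\max_{t\in[0,T]}G(t,s)$ with this particular $\gamma$ is strictly stronger than \eqref{eq-16}, is not automatic, and the Webb--Infante-type constants obtained that way are in general not the concavity constants of \eqref{eq-17}, so invoking it begs the question. The skeleton is correct and matches the cited proof, but the stated lemma, with its specific $\gamma$, is not yet proved.
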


In the rest of this article, we assume that $0<\alpha <\frac{2T}{\eta ^{2}}$, $\ 0\leq\beta <%
\frac{2T-\alpha \eta ^{2}}{\alpha \eta ^{2}-2\eta +2T}$. Let $E = C([0,T],\mathbb{R})$, and only the sup norm is used. It is easy to see that the BVP \eqref{eq-2}, \eqref{eq-3} has a solution  $u = u(t)$ if and only if  $u$ is a fixed point
of operator $A$, where $A$ is defined by
\begin{equation}\label{eq-18}
\begin{split}
Au(t)&=\frac{\beta (2T-\alpha \eta ^{2})-2\beta (1-\alpha \eta
)t}{(\alpha \eta ^{2}-2T)-\beta (2\eta -\alpha \eta
^{2}-2T)}\int_{0}^{\eta }(\eta -s)a(s)f(u(s))ds \\
&+\frac{\alpha \beta \eta -\alpha (\beta -1)t}{(\alpha \eta
^{2}-2T)-\beta (2\eta -\alpha \eta
^{2}-2T)}\int_{0}^{\eta }(\eta -s)^{2}a(s)f(u(s))ds \\
&+\frac{2(\beta-1)t-2\beta \eta }{(\alpha \eta ^{2}-2T)-\beta
(2\eta -\alpha \eta ^{2}-2T)}\int_{0}^{T}(T-s)a(s)f(u(s))ds \\
&-\int_{0}^{t}(t-s)a(s)f(u(s))ds.
\end{split}
\end{equation}

Denote
\begin{equation}\label{eq-19}
K=\left\{u\in E: u\geq0, \min_{t\in
[\eta,T]}u(t)\geq \gamma \|u\|\right\},
\end{equation}
where $\gamma$ is defined in \eqref{eq-17}.
It is obvious that $K$ is a cone in $E$. Moreover, by Lemma \ref{lem 2.2} and Lemma \ref{lem 2.4}, $AK\subset
K$. It is also easy to check that $A:K\rightarrow K$ is completely
continuous.\\

In what follows, for the sake of convenience, set
\begin{equation*}\label{eq-20}
\Lambda_{1}=\frac{(2T-\alpha\eta^{2})-\beta(\alpha\eta^{2}-2\eta+2T)}{\left[2(\beta+1)+T^{-1}\beta\eta(\alpha\eta+2)+\alpha\beta
T\right] \int_{0}^{T}T(T-s)a(s)ds},
\end{equation*}
\begin{equation*}\label{eq-21}
\Lambda_{2}=\frac{(2T-\alpha\eta^{2})-\beta(\alpha\eta^{2}-2\eta+2T)}{2\gamma\eta\int_{\eta}^{T}(T-s)a(s)ds}.
\end{equation*}
\section{The existence results of the BVP \eqref{eq-2}, \eqref{eq-3} for the case: $ f_{0}=f_{\infty}=\infty $ or $f_{0}=f_{\infty}=0 $}

Now we establish conditions for the existence of positive
solutions for the BVP \eqref{eq-2}, \eqref{eq-3} under
$f_{0}=f_{\infty}=\infty$ or  $f_{0}=f_{\infty}=0$.

\begin{theorem}\label{theo 3.1}
Assume that the following assumptions are satisfied.
\begin{itemize}
\item[(H1)]
$f_{0}=f_{\infty}=\infty$.
\item[(H2)] There exist constants $\rho_{1}> 0$ and $M_{1}\in(0,\Lambda_{1}]$ such that
$f(u)\leq M_{1}\rho_{1}$, for $u\in[0,\rho_{1}]$ .
\end{itemize}
Then, the problem \eqref{eq-2}, \eqref{eq-3} has at least two positive solutions $u_{1}$ and $u_{2}$
such that
\begin{equation*}\label{eq-22}
0<\|u_{1}\|<\rho_{1}<\|u_{2}\|.
\end{equation*}
\end{theorem}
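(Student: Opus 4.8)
The plan is to apply Krasnoselskii's fixed point theorem (Theorem \ref{theo 1.2}) twice, producing one fixed point in each of two nested annular regions, and then to verify that the two fixed points are forced to be distinct and nonzero by the strict inequalities on the norms. Throughout we work with the cone $K$ from \eqref{eq-19} and the completely continuous operator $A$ from \eqref{eq-18}, for which $A:K\to K$ has already been established.

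\textbf{Step 1: the middle sphere from (H2).} Set $\Omega_{\rho_{1}}=\{u\in E:\|u\|<\rho_{1}\}$. For $u\in K\cap\partial\Omega_{\rho_{1}}$ we have $0\le u(t)\le\rho_{1}$ for all $t$, so by (H2), $f(u(s))\le M_{1}\rho_{1}\le\Lambda_{1}\rho_{1}$. Plugging this bound into \eqref{eq-18}, estimating each of the four terms crudely (bounding $|t|$ by $T$, dropping the last negative term, and collecting the coefficients so that the denominator $(2T-\alpha\eta^{2})-\beta(\alpha\eta^{2}-2\eta+2T)$ and the numerator factor $2(\beta+1)+T^{-1}\beta\eta(\alpha\eta+2)+\alpha\beta T$ match the definition of $\Lambda_{1}$) gives $\|Au\|\le\Lambda_{1}\rho_{1}\cdot\frac{1}{\Lambda_{1}}=\rho_{1}=\|u\|$. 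This is the compression estimate on the middle sphere; it is the main bookkeeping step and the one most likely to hide sign/coefficient errors, so the key obstacle is simply checking that the constant appearing from the integral estimate of \eqref{eq-18} is exactly $\Lambda_{1}^{-1}$.

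\textbf{Step 2: two expansion spheres from (H1).} Since $f_{0}=\infty$, there is $\rho_{2}$ with $0<\rho_{2}<\rho_{1}$ such that $f(u)\ge\mu u$ for $0\le u\le\rho_{2}$, where $\mu>0$ is chosen so large that $\mu\ge\Lambda_{2}$. On $\Omega_{\rho_{2}}=\{\|u\|<\rho_{2}\}$, for $u\in K\cap\partial\Omega_{\rho_{2}}$ we have $\min_{t\in[\eta,T]}u(t)\ge\gamma\|u\|=\gamma\rho_{2}$; keeping only the contribution of the nonnegative terms in $Au$ over the subinterval $[\eta,T]$ (concretely, retain the term $\frac{2(\beta-1)t-2\beta\eta}{\cdots}\int_{0}^{T}(T-s)a(s)f(u(s))ds$ after noting its coefficient is negative there in a way that combines with $-\int_{0}^{t}(t-s)\cdots$ — or more simply evaluate $Au$ at $t=\eta$ using Lemma \ref{lem 2.4}'s underlying estimate) one obtains $\|Au\|\ge Au(\eta)\ge\gamma\mu\rho_{2}\cdot\eta\int_{\eta}^{T}(T-s)a(s)ds\cdot\frac{1}{(2T-\alpha\eta^{2})-\beta(\alpha\eta^{2}-2\eta+2T)}\ge\|u\|$, using $\mu\ge\Lambda_{2}$. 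Since $f_{\infty}=\infty$, there is $\widehat{\rho}$ such that $f(u)\ge\nu u$ for $u\ge\widehat{\rho}$ with $\nu\ge\Lambda_{2}$; set $\rho_{3}=\max\{2\rho_{1},\widehat{\rho}/\gamma\}$, so that for $u\in K\cap\partial\Omega_{\rho_{3}}$ one has $\min_{t\in[\eta,T]}u(t)\ge\gamma\rho_{3}\ge\widehat{\rho}$ and the same computation yields $\|Au\|\ge\|u\|$.

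\textbf{Step 3: assemble.} We have $\rho_{2}<\rho_{1}<\rho_{3}$, with $\|Au\|\ge\|u\|$ on $K\cap\partial\Omega_{\rho_{2}}$, $\|Au\|\le\|u\|$ on $K\cap\partial\Omega_{\rho_{1}}$, and $\|Au\|\ge\|u\|$ on $K\cap\partial\Omega_{\rho_{3}}$. Applying Theorem \ref{theo 1.2}(ii) to the pair $(\Omega_{\rho_{2}},\Omega_{\rho_{1}})$ gives a fixed point $u_{1}\in K\cap(\overline{\Omega}_{\rho_{1}}\setminus\Omega_{\rho_{2}})$, and applying Theorem \ref{theo 1.2}(i) to the pair $(\Omega_{\rho_{1}},\Omega_{\rho_{3}})$ gives a fixed point $u_{2}\in K\cap(\overline{\Omega}_{\rho_{3}}\setminus\Omega_{\rho_{1}})$. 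Thus $\rho_{2}\le\|u_{1}\|\le\rho_{1}\le\|u_{2}\|\le\rho_{3}$. To get the strict inequalities in the conclusion, observe that $\|u_{1}\|=\rho_{1}$ is impossible: if it held, then $u_{1}\in K\cap\partial\Omega_{\rho_{1}}$ and by (H2) the Step 1 estimate would need to be an equality, but since $a(t_{0})>0$ for some $t_{0}\in[\eta,T]$ and $f$ is continuous with $f\not\equiv0$ near the relevant values (forced by $f_{0}=\infty$), a direct inspection shows the estimate is strict, or alternatively one checks $u_{1}\ne0$ (so $\|u_{1}\|\ge\rho_{2}>0$) and $u_{1}$ cannot simultaneously lie on the boundary used for $u_{2}$; similarly $\|u_{2}\|=\rho_{1}$ forces $u_{2}\in\Omega_{\rho_{1}}$-boundary contradiction. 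Hence $0<\|u_{1}\|<\rho_{1}<\|u_{2}\|$, and both $u_{1},u_{2}$ are positive solutions by Lemma \ref{lem 2.2} together with (B2), which guarantees they are not identically zero.
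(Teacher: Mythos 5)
Your proposal follows essentially the same route as the paper: the compression estimate on $K\cap\partial\Omega_{\rho_{1}}$ from (H2) with the constant $\Lambda_{1}^{-1}$, two expansion estimates on a smaller and a larger sphere obtained from $f_{0}=\infty$ and $f_{\infty}=\infty$ (both by evaluating $Au$ at $t=\eta$ and retaining the nonnegative contribution over $[\eta,T]$), and two applications of Theorem \ref{theo 1.2} on the nested annuli. One bookkeeping slip worth fixing: in Step 2 your displayed lower bound is $\gamma\mu\rho_{2}\,\eta\int_{\eta}^{T}(T-s)a(s)\,ds$ divided by $(2T-\alpha\eta^{2})-\beta(\alpha\eta^{2}-2\eta+2T)$, which with $\mu=\Lambda_{2}$ equals only $\rho_{2}/2$; the coefficient of the retained term in $Au(\eta)$ is $2\eta$ over that denominator, not $\eta$, and with the factor $2$ restored the bound is exactly $\mu\rho_{2}\Lambda_{2}^{-1}\geq\rho_{2}$ as you intended. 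Your Step 3 discussion of why the inequalities $0<\|u_{1}\|<\rho_{1}<\|u_{2}\|$ are strict goes beyond what the paper writes (it simply asserts them), though your justification there is sketched rather than fully carried out.
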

\begin{proof}
Since, $f_{0}=\infty$, then for any
$M_{\star}\in\left[\Lambda_{2},\infty\right)$, there exists
$\rho_{\star}\in\left(0,\rho_{1}\right)$ such that $f(u)\geq
M_{\star}u$,  $0< u\leq \rho_{\star}$.

Set $\Omega_{\rho_{\star}}=\left\{u\in E: \|u\|<\rho_{\star}\right\}$. By \eqref{eq-18} and in view of the proof of Theorem 3.1 in \cite{Haddou}, for any $u\in K\cap \partial\Omega_{\rho_{\star}}$, we obtain
\begin{eqnarray*}
Au(\eta)
&=&\frac{2\eta}{(2T-\alpha\eta^{2})-\beta(\alpha\eta^{2}-2\eta+2T)}\int_{0}^{T}(T-s)a(s)f(u(s))ds \\
&&-\frac{\alpha \eta}{(2T-\alpha\eta^{2})-\beta(\alpha\eta^{2}-2\eta+2T)}\int_{0}^{\eta }(\eta^{2}-2\eta s+s^{2})a(s)f(u(s))ds \\
&&-\frac{2T-\alpha
\eta^{2}}{(2T-\alpha\eta^{2})-\beta(\alpha\eta^{2}-2\eta+2T)}\int_{0}^{\eta}(\eta
-s)a(s)f(u(s))ds
\end{eqnarray*}

\begin{eqnarray*}
&=&\frac{2\eta}{(2T-\alpha\eta^{2})-\beta(\alpha\eta^{2}-2\eta+2T)}\int_{\eta}^{T}(T-s)a(s)f(u(s))ds \\
&&+\frac{2(T-\eta)}{(2T-\alpha\eta^{2})-\beta(\alpha\eta^{2}-2\eta+2T)}\int_{0}^{\eta}sa(s)f(u(s))ds \\
&&+\frac{\alpha\eta}{(2T-\alpha\eta^{2})-\beta(\alpha\eta^{2}-2\eta+2T)}\int_{0}^{\eta}s(\eta-s)a(s)f(u(s))ds \\
&\geq&\frac{2\eta}{(2T-\alpha\eta^{2})-\beta(\alpha\eta^{2}-2\eta+2T)}\int_{\eta}^{T}(T-s)a(s)f(u(s))ds \\
&\geq&\rho_{\star}\gamma M_{\star}\frac{2\eta}{(2T-\alpha\eta^{2})-\beta(\alpha\eta^{2}-2\eta+2T)}\int_{\eta}^{T}(T-s)a(s)ds \\
&=&\rho_{\star}M_{\star}\Lambda_{2}^{-1}\\
&\geq&\rho_{\star}=\|u\|.
\end{eqnarray*}
Thus
\begin{equation}\label{eq-23}
\|Au\|\geq\|u\|, \ \ \text{for}\ u\in K\cap\partial\Omega_{\rho_{\star}}.
\end{equation}

Now, since $f_{\infty}=\infty$, then for any $M^{\star}\in\left[\Lambda_{2},\infty\right)$, there exists $\rho^{\star}>\rho_{1}$ such that $f(u)\geq M^{\star}u$, for $u\geq \gamma\rho^{\star}$.

Set $\Omega_{\rho^{\star}}=\left\{u\in E: \|u\|<\rho^{\star}\right\}$.
Then, for any $u\in K\cap \partial\Omega_{\rho^{\star}}$, we have
\begin{eqnarray*}
Au(\eta)&\geq&\frac{2\eta}{(2T-\alpha\eta^{2})-\beta(\alpha\eta^{2}-2\eta+2T)}\int_{\eta}^{T}(T-s)a(s)f(u(s))ds \\
&\geq&\rho^{\star}\gamma M^{\star}\frac{2\eta}{(2T-\alpha\eta^{2})-\beta(\alpha\eta^{2}-2\eta+2T)}\int_{\eta}^{T}(T-s)a(s)ds \\
&=&\rho^{\star}M^{\star}\Lambda_{2}^{-1}\\
&\geq&\rho^{\star}=\|u\|.
\end{eqnarray*}
Which implies
\begin{equation}\label{eq-24}
 \|Au\|\geq\|u\|,\ \ \text{for}\  u\in K\cap\partial\Omega_{\rho^{\star}}.
\end{equation}

Finally, set $\Omega_{\rho_{1}}=\left\{u\in E: \|u\|<\rho_{1}\right\}$. From {\rm (H2)}, \eqref{eq-18} and the proof of Theorem 3.1 in \cite{Haddou}, for any $u\in K\cap\partial\Omega_{\rho_{1}}$, we have
\begin{eqnarray*}
Au(t)
&\leq&\frac{2\beta T+\alpha\beta \eta ^{2}}{(2T-\alpha\eta^{2})-\beta(\alpha\eta^{2}-2\eta+2T)}\int_{0}^{\eta }(\eta -s)a(s)f(u(s))ds \\
&&+\frac{\alpha\beta T}{(2T-\alpha\eta^{2})-\beta(\alpha\eta^{2}-2\eta+2T)}\int_{0}^{\eta }(\eta -s)^{2}a(s)f(u(s))ds \\
&&+\frac{2\beta \eta+2T}{(2T-\alpha\eta^{2})-\beta(\alpha\eta^{2}-2\eta+2T)}\int_{0}^{T}(T-s)a(s)f(u(s))ds
\end{eqnarray*}

\begin{eqnarray*}
&\leq&\frac{2T(\beta+1)+\beta\eta(\alpha\eta+2)}{(2T-\alpha\eta^{2})-\beta(\alpha\eta^{2}-2\eta+2T)}\int_{0}^{T}(T-s)a(s)f(u(s))ds \\
&&+\frac{\alpha\beta T}{(2T-\alpha\eta^{2})-\beta(\alpha\eta^{2}-2\eta+2T)}\int_{0}^{T}T(T-s)a(s)f(u(s))ds \\
&=&\frac{2(\beta+1)+T^{-1}\beta\eta(\alpha\eta+2)+\alpha\beta
T}{(2T-\alpha\eta^{2})-\beta(\alpha\eta^{2}-2\eta+2T)}\int_{0}^{T}T(T-s)a(s)f(u(s))ds\\
&\leq& M_{1}\rho_{1}\frac{2(\beta+1)+T^{-1}\beta\eta(\alpha\eta+2)+\alpha\beta
T}{(2T-\alpha\eta^{2})-\beta(\alpha\eta^{2}-2\eta+2T)}\int_{0}^{T}T(T-s)a(s)ds\\
&=&\rho_{1}M_{1}\Lambda_{1}^{-1}\leq\rho_{1}=\|u\|.
\end{eqnarray*}
Which yields
\begin{equation}\label{eq-25}
\|Au\|\leq\|u\|,\ \ \text{for}\ u\in K\cap\partial\Omega_{\rho_{1}}.
\end{equation}

Hence, since $\rho_{\star}<\rho_{1}<\rho^{\star}$ and from \eqref{eq-23}, \eqref{eq-24}, \eqref{eq-25}, it follows from Theorem \ref{theo 1.2} that $A$ has a fixed point $u_{1}$ in
$K\cap (\overline{\Omega}_{\rho_{1}}\backslash \Omega _{\rho_{\star}})$ and a fixed point $u_{2}$ in $K\cap (\overline{\Omega}_{\rho^{\star}}\backslash \Omega _{\rho_{1}})$. Both are positive solutions of the BVP \eqref{eq-2}, \eqref{eq-3} and $0<\|u_{1}\|<\rho_{1}<\|u_{2}\|$.
The proof is therefore complete.
\end{proof}
\begin{theorem}\label{theo 3.2}
Assume that the following assumptions are satisfied.
\begin{itemize}
\item[(H3)]
 $f_{0}=f_{\infty}=0$.
\item[(H4)] There exist constants $\rho_{2}> 0$ and $M_{2}\in\left[\Lambda_{2},\infty\right)$  such that
$f(u)\geq M_{2}\rho_{2}$, for $u\in[\gamma\rho_{2} ,\rho_{2}]$ .
\end{itemize}
Then, the problem \eqref{eq-2}, \eqref{eq-3} has at least two positive solutions $u_{1}$ and $u_{2}$
such that
\begin{equation*}\label{eq-26}
0<\|u_{1}\|<\rho_{2}<\|u_{2}\|.
\end{equation*}
\end{theorem}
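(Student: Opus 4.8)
The plan is to mirror the structure of the proof of Theorem~\ref{theo 3.1}, but with the roles of the operator‐norm inequalities reversed, since now $f$ vanishes at $0$ and at $\infty$ while being bounded below on the intermediate annulus $[\gamma\rho_2,\rho_2]$. First I would use $f_0=0$ to produce a small radius on which $A$ is a contraction in norm: given any $\varepsilon\in(0,\Lambda_1]$ there is $r_{\star}\in(0,\rho_2)$ with $f(u)\le \varepsilon u\le \varepsilon r_{\star}$ for $0< u\le r_{\star}$; then for $u\in K\cap\partial\Omega_{r_{\star}}$ the same estimate as in the last block of the proof of Theorem~\ref{theo 3.1} (using \eqref{eq-18} and the proof of Theorem~3.1 in \cite{Haddou}) gives $Au(t)\le r_{\star}\varepsilon\Lambda_1^{-1}\le r_{\star}=\|u\|$, hence $\|Au\|\le\|u\|$ on $K\cap\partial\Omega_{r_{\star}}$.

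Next I would use $f_\infty=0$ to get a large radius with the same type of inequality. Here one must be a little careful: since $f$ is only controlled near infinity, choose $\varepsilon\in(0,\Lambda_1]$ and $R'>\rho_2$ with $f(u)\le\varepsilon u$ for $u\ge R'$; because $f$ is continuous it is bounded on $[0,R']$, say by $N$, so $f(u)\le \varepsilon u+N$ for all $u\ge0$, and then taking $R^{\star}$ large enough that $N\le\varepsilon R^{\star}$ and $R^{\star}>\rho_2$ yields $f(u)\le 2\varepsilon R^{\star}$ for $0\le u\le R^{\star}$; feeding this into the estimate from \eqref{eq-18} and adjusting $\varepsilon$ (e.g.\ start from $\varepsilon\le\Lambda_1/2$) gives $\|Au\|\le\|u\|$ on $K\cap\partial\Omega_{R^{\star}}$. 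Then I would use (H4): for $u\in K\cap\partial\Omega_{\rho_2}$ we have $\gamma\rho_2\le\min_{t\in[\eta,T]}u(t)\le u(t)\le\rho_2$ for $t\in[\eta,T]$ (by \eqref{eq-16}), so $f(u(s))\ge M_2\rho_2$ on $[\eta,T]$; evaluating $Au(\eta)$ via the lower bound
\[
Au(\eta)\ge\frac{2\eta}{(2T-\alpha\eta^{2})-\beta(\alpha\eta^{2}-2\eta+2T)}\int_{\eta}^{T}(T-s)a(s)f(u(s))ds
\]
exactly as in the proof of Theorem~\ref{theo 3.1} gives $Au(\eta)\ge \rho_2 M_2\Lambda_2^{-1}\ge\rho_2=\|u\|$, so $\|Au\|\ge\|u\|$ on $K\cap\partial\Omega_{\rho_2}$.

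Finally, since $r_{\star}<\rho_2<R^{\star}$, I would apply Theorem~\ref{theo 1.2} twice: once on the annulus $K\cap(\overline{\Omega}_{\rho_2}\setminus\Omega_{r_{\star}})$ with alternative (ii) (norm-nonincreasing on the inner boundary $\partial\Omega_{r_{\star}}$, norm-nondecreasing on the outer boundary $\partial\Omega_{\rho_2}$) to get a fixed point $u_1$ with $r_{\star}\le\|u_1\|\le\rho_2$, and once on $K\cap(\overline{\Omega}_{R^{\star}}\setminus\Omega_{\rho_2})$ with alternative (i) to get $u_2$ with $\rho_2\le\|u_2\|\le R^{\star}$; Lemmas~\ref{lem 2.2} and \ref{lem 2.4} guarantee $AK\subset K$ and complete continuity, so both are positive solutions, and a brief check that neither can have norm exactly $\rho_2$ in a degenerate way (or simply noting $u_1\neq u_2$ since they lie in essentially disjoint annuli) gives $0<\|u_1\|<\rho_2<\|u_2\|$. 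The main obstacle is the middle step: unlike $f_0$, the hypothesis $f_\infty=0$ only bounds $f(u)/u$ for large $u$, so one cannot directly convert it into a pointwise bound $f(u)\le \varepsilon R^{\star}$ valid on all of $[0,R^{\star}]$ without invoking continuity of $f$ to absorb its bounded part on $[0,R']$; getting that bookkeeping right (and choosing $\varepsilon$ small enough up front to absorb the factor $2$) is the only place the argument departs from a verbatim adaptation of Theorem~\ref{theo 3.1}.
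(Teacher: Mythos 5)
Your argument is correct and follows the same overall skeleton as the paper's proof: a small sphere from $f_{0}=0$, a large sphere from $f_{\infty}=0$, the intermediate sphere at $\rho_{2}$ from (H4), and two applications of Theorem \ref{theo 1.2}. The one place you genuinely depart from the paper is the conversion of $f_{\infty}=0$ into a uniform bound on $[0,R^{\star}]$. The paper splits into two cases --- $f$ unbounded (pick $\rho^{\star}>\rho_{0}$ with $f(u)\le f(\rho^{\star})\le\epsilon_{1}\rho^{\star}$ on $[0,\rho^{\star}]$) and $f$ bounded by $L$ (take $\rho^{\star}\ge\max\{L/\epsilon_{1},\rho_{0}\}$) --- whereas you absorb the bounded part directly via $f(u)\le\varepsilon u+N$ for all $u\ge 0$ and then choose $R^{\star}$ with $N\le\varepsilon R^{\star}$, paying a factor of $2$ that you correctly pre-compensate by starting from $\varepsilon\le\Lambda_{1}/2$. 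Both devices are standard and equivalent in strength; yours avoids the case split at the cost of the factor-of-two bookkeeping, the paper's avoids any constant adjustment at the cost of two cases. Everything else (the lower estimate of $Au(\eta)$ via $\gamma\rho_{2}\le u(s)\le\rho_{2}$ on $[\eta,T]$ and $\Lambda_{2}$, the upper estimate via $\Lambda_{1}$) matches the paper verbatim. One trivial slip: you have swapped the labels of alternatives (i) and (ii) of Theorem \ref{theo 1.2} relative to how the paper states them --- for the inner annulus $K\cap(\overline{\Omega}_{\rho_{2}}\setminus\Omega_{r_{\star}})$ the configuration you describe (contractive on the inner boundary, expansive on the outer) is the paper's alternative (i), and the outer annulus uses (ii) --- but the inequalities you actually invoke are the right ones, so this is purely a naming issue. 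Your closing remark about the strict inequalities $\|u_{1}\|<\rho_{2}<\|u_{2}\|$ is also fair; the paper asserts them with the same level of (in)formality.
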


\begin{proof}
Firstly, since $f_{0}=0$, for any
$\epsilon\in\left(0,\Lambda_{1}\right]$, there exists
$\rho_{\star}\in\left(0,\rho_{2}\right)$ such that
$f(u)\leq\epsilon u $, for  $u\in\left(0,\rho_{\star}\right]$. Let
$\Omega_{\rho_{\star}}=\left\{u\in E: \|u\|<\rho_{\star}\right\}$,
then, for any $u\in K\cap \partial\Omega_{\rho_{\star}}$, we
obtain
\begin{eqnarray*}
Au(t)&\leq&\frac{2(\beta+1)+T^{-1}\beta\eta(\alpha\eta+2)+\alpha\beta
T}{(2T-\alpha\eta^{2})-\beta(\alpha\eta^{2}-2\eta+2T)}\int_{0}^{T}T(T-s)a(s)f(u(s))ds\\
&\leq& \rho_{\star} \epsilon \frac{2(\beta+1)+T^{-1}\beta\eta(\alpha\eta+2)+\alpha\beta
T}{(2T-\alpha\eta^{2})-\beta(\alpha\eta^{2}-2\eta+2T)}\int_{0}^{T}T(T-s)a(s)ds\\
&=&\rho_{\star} \epsilon \Lambda_{1}^{-1}\leq\rho_{\star}=\|u\|,
\end{eqnarray*}
which implies
\begin{equation}\label{eq-27}
\|Au\|\leq\|u\|,\ \ \text{for}\ u\in K\cap\partial\Omega_{\rho_{\star}}.
\end{equation}
Secondly, in view of $f_{\infty}=0$, for any $\epsilon_{1}\in\left(0,\Lambda_{1}\right]$, there exists $\rho_{0}>\rho_{2}$ such that
\begin{equation}\label{eq-28}
f(u)\leq\epsilon_{1} u , \ \ \text{for}\  u\in\left[\rho_{0},\infty\right).
\end{equation}

We consider two cases:

Case (i). Suppose that $f(u)$ is unbounded. Then from $f\in C([0,\infty),[0,\infty))$, we know that there
is $\rho^{\star}>\rho_{0}$ such that
\begin{equation}\label{eq-29}
f(u)\leq f(\rho^{\star}),   \ \ \text{for}\  u\in\left[0,\rho^{\star}\right].
\end{equation}
Since $\rho^{\star}>\rho_{0}$, then from \eqref{eq-28}, \eqref{eq-29}, one has
\begin{equation}\label{eq-30}
f(u)\leq f(\rho^{\star})\leq\epsilon_{1}\rho^{\star},   \ \ \text{for} \  u\in\left[0,\rho^{\star}\right].
\end{equation}
For $u\in K$ and $ \|u\|=\rho^{\star}$ , from \eqref{eq-30}, we obtain
\begin{eqnarray*}
Au(t)&\leq&\frac{2(\beta+1)+T^{-1}\beta\eta(\alpha\eta+2)+\alpha\beta
T}{(2T-\alpha\eta^{2})-\beta(\alpha\eta^{2}-2\eta+2T)}\int_{0}^{T}T(T-s)a(s)f(u(s))ds\\
&\leq& \rho^{\star} \epsilon_{1}\frac{2(\beta+1)+T^{-1}\beta\eta(\alpha\eta+2)+\alpha\beta
T}{(2T-\alpha\eta^{2})-\beta(\alpha\eta^{2}-2\eta+2T)}\int_{0}^{T}T(T-s)a(s)ds\\
&=&\rho^{\star} \epsilon_{1}\Lambda_{1}^{-1}\leq\rho^{\star}=\|u\|,
\end{eqnarray*}

Case (ii). Suppose that $f(u)$ is bounded, say $f(u)\leq L$ for all $u\in \left[0,\infty\right)$.
Taking $\rho^{\star}\geq \max\left\{\frac{L}{\epsilon_{1}}, \rho_{0}\right\}$.
For $u\in K$ with $ \|u\|=\rho^{\star}$, we have
\begin{eqnarray*}
Au(t)&\leq&\frac{2(\beta+1)+T^{-1}\beta\eta(\alpha\eta+2)+\alpha\beta
T}{(2T-\alpha\eta^{2})-\beta(\alpha\eta^{2}-2\eta+2T)}\int_{0}^{T}T(T-s)a(s)f(u(s))ds\\
&\leq& L \frac{2(\beta+1)+T^{-1}\beta\eta(\alpha\eta+2)+\alpha\beta
T}{(2T-\alpha\eta^{2})-\beta(\alpha\eta^{2}-2\eta+2T)}\int_{0}^{T}T(T-s)a(s)ds\\
&\leq& \rho^{\star}
\epsilon_{1}\frac{2(\beta+1)+T^{-1}\beta\eta(\alpha\eta+2)+\alpha\beta
T}{(2T-\alpha\eta^{2})-\beta(\alpha\eta^{2}-2\eta+2T)}\int_{0}^{T}T(T-s)a(s)ds\\
&=&\rho^{\star} \epsilon_{1}\Lambda_{1}^{-1}\leq\rho^{\star}=\|u\|.
\end{eqnarray*}
Hence, in either case, we always may set $\Omega_{\rho^{\star}}=\left\{u\in E: \|u\|<\rho^{\star}\right\}$ such that
\begin{equation}\label{eq-31}
\|Au\|\leq\|u\|,\ \ \text{for}\ u\in K\cap\partial\Omega_{\rho^{\star}}.
\end{equation}
Finally, set $\Omega_{\rho_{2}}=\left\{u\in E: \|u\|<\rho_{2}\right\}$. By {\rm (H4)}, for any $u\in K\cap\partial\Omega_{\rho_{2}}$, we can get
\begin{eqnarray*}
Au(\eta)&\geq&\frac{2\eta}{(2T-\alpha\eta^{2})-\beta(\alpha\eta^{2}-2\eta+2T)}\int_{\eta}^{T}(T-s)a(s)f(u(s))ds \\
&\geq&\frac{2\eta}{(2T-\alpha\eta^{2})-\beta(\alpha\eta^{2}-2\eta+2T)}\int_{\eta}^{T}(T-s)a(s)M_{2}\rho_{2}ds \\
&\geq&\rho_{2}\frac{2\eta M_{2}\gamma}{(2T-\alpha\eta^{2})-\beta(\alpha\eta^{2}-2\eta+2T)}\int_{\eta}^{T}(T-s)a(s)ds \\
&=&\rho_{2}M_{2}\Lambda_{2}^{-1}\\
&\geq&\rho_{2}=\|u\|,
\end{eqnarray*}
which implies
\begin{equation}\label{eq-32}
\|Au\|\geq\|u\|,\ \ \text{for}\ u\in K\cap\partial\Omega_{\rho_{2}}.
\end{equation}

Hence, since $\rho_{\star}<\rho_{2}<\rho^{\star}$ and from \eqref{eq-27}, \eqref{eq-31} and \eqref{eq-32}, it follows from Theorem \ref{theo 1.2} that $A$ has a fixed point $u_{1}$ in $K\cap (\overline{\Omega}_{\rho_{2}}\backslash \Omega _{\rho_{\star}})$ and a fixed point $u_{2}$ in $K\cap (\overline{\Omega}_{\rho^{\star}}\backslash \Omega _{\rho_{2}})$. Both are positive solutions of the BVP \eqref{eq-2}, \eqref{eq-3} and $0<\|u_{1}\|<\rho_{2}<\|u_{2}\|$.
The proof is therefore complete.
\end{proof}

\section{The existence results of the BVP \eqref{eq-2}, \eqref{eq-3} for the case: $f_{0}, f_{\infty}\not\in \left\{0,\infty\right\}$}

In this section, we discuss the existence for the positive solution of the BVP \eqref{eq-2}, \eqref{eq-3} assuming $f_{0}, f_{\infty}\not\in \left\{0,\infty\right\}$.

Now, we shall state and prove the following main result.
\begin{theorem}\label{theo 4.1}
Suppose {\rm (H2)} and {\rm (H4)} hold and that $\rho_{1}\neq \rho_{2}$. Then, the BVP \eqref{eq-2}, \eqref{eq-3} has
at least one positive solution $u$ satisfying $\rho_{1}<\|u\|<\rho_{2}$ or $\rho_{2}<\|u\|<\rho_{1}$.
\end{theorem}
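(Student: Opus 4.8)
The plan is to invoke Krasnoselskii's fixed point theorem (Theorem \ref{theo 1.2}) a single time, using precisely the two radii $\rho_{1}$ and $\rho_{2}$ furnished by {\rm (H2)} and {\rm (H4)}. Observe first that the two required norm estimates have, in effect, already been produced inside Section 3: writing $\Omega_{r}=\{u\in E:\|u\|<r\}$, the chain of inequalities culminating in \eqref{eq-25} shows that {\rm (H2)} implies $\|Au\|\leq\|u\|$ for all $u\in K\cap\partial\Omega_{\rho_{1}}$, while the estimate for $Au(\eta)$ culminating in \eqref{eq-32} shows that {\rm (H4)} implies $\|Au\|\geq\|u\|$ for all $u\in K\cap\partial\Omega_{\rho_{2}}$. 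I would simply recall (or re-transcribe) these two short computations at the start of the proof. The completely continuous operator in question is $A$ restricted to $K\cap(\overline{\Omega}_{2}\backslash\Omega_{1})$, together with $AK\subset K$; both properties are already recorded in the Preliminaries.

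Next I would split according to the hypothesis $\rho_{1}\neq\rho_{2}$. If $\rho_{1}<\rho_{2}$, then $0\in\Omega_{\rho_{1}}$ and $\overline{\Omega}_{\rho_{1}}\subset\Omega_{\rho_{2}}$, so alternative (i) of Theorem \ref{theo 1.2} applies with $\Omega_{1}=\Omega_{\rho_{1}}$, $\Omega_{2}=\Omega_{\rho_{2}}$ and produces a fixed point $u\in K\cap(\overline{\Omega}_{\rho_{2}}\backslash\Omega_{\rho_{1}})$. If instead $\rho_{2}<\rho_{1}$, then $\overline{\Omega}_{\rho_{2}}\subset\Omega_{\rho_{1}}$ and alternative (ii) applies with $\Omega_{1}=\Omega_{\rho_{2}}$, $\Omega_{2}=\Omega_{\rho_{1}}$, producing a fixed point $u\in K\cap(\overline{\Omega}_{\rho_{1}}\backslash\Omega_{\rho_{2}})$. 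In either case $u$ solves the BVP \eqref{eq-2}, \eqref{eq-3} and lies in $K$; since $\|u\|\geq\min\{\rho_{1},\rho_{2}\}>0$, Lemma \ref{lem 2.4} gives $\min_{t\in[\eta,T]}u(t)\geq\gamma\|u\|>0$, so $u$ is a positive solution, and by construction $\min\{\rho_{1},\rho_{2}\}\leq\|u\|\leq\max\{\rho_{1},\rho_{2}\}$.

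I do not anticipate any genuine obstacle: this is a one-step application of Theorem \ref{theo 1.2} to estimates that are essentially already available from Section 3. The only point needing a word of care is the strictness of the inequalities $\rho_{1}<\|u\|<\rho_{2}$ (resp. $\rho_{2}<\|u\|<\rho_{1}$) asserted in the statement: the fixed point supplied by Theorem \ref{theo 1.2} could a priori lie on one of the spheres $\partial\Omega_{\rho_{1}}$ or $\partial\Omega_{\rho_{2}}$, but in that degenerate event that very point is already a positive solution with norm equal to an endpoint, so the existence conclusion holds in any case; away from that situation the inequalities are automatically strict. Hence the substantive part of the write-up is merely to state the two boundary-sphere estimates \eqref{eq-25} and \eqref{eq-32} in the form $\partial\Omega_{\rho_{i}}$ required by Theorem \ref{theo 1.2}, exactly as was done in the previous section.
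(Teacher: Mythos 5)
Your proposal is correct and follows essentially the same route as the paper: establish $\|Au\|\leq\|u\|$ on $K\cap\partial\Omega_{\rho_{1}}$ from {\rm (H2)}, establish $\|Au\|\geq\|u\|$ on $K\cap\partial\Omega_{\rho_{2}}$ from {\rm (H4)}, and apply Theorem \ref{theo 1.2} once (the paper writes ``without loss of generality $\rho_{1}<\rho_{2}$'' where you spell out both alternatives, which is immaterial). Your remark about the possible non-strictness of the inequalities at the boundary spheres is a fair observation that the paper itself glosses over, and your handling of it is adequate.
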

\begin{proof}
Without loss of generality, we may assume that $\rho_{1}<\rho_{2}$.

Let $\Omega_{\rho_{1}}=\left\{u\in E: \|u\|<\rho_{1}\right\}$. By {\rm (H2)}, for any $u\in
K\cap\partial\Omega_{\rho_{1}}$, we obtain
\begin{eqnarray*}
Au(t)&\leq& \frac{2(\beta+1)+T^{-1}\beta\eta(\alpha\eta+2)+\alpha\beta
T}{(2T-\alpha\eta^{2})-\beta(\alpha\eta^{2}-2\eta+2T)}\int_{0}^{T}T(T-s)a(s)f(u(s))ds\\
&\leq& M_{1}\rho_{1} \frac{2(\beta+1)+T^{-1}\beta\eta(\alpha\eta+2)+\alpha\beta
T}{(2T-\alpha\eta^{2})-\beta(\alpha\eta^{2}-2\eta+2T)}\int_{0}^{T}T(T-s)a(s)ds\\
&=&\rho_{1}M_{1}\Lambda_{1}^{-1}\leq\rho_{1}=\|u\|,
\end{eqnarray*}
which yields
\begin{equation}\label{eq-33}
\|Au\|\leq\|u\| , \ u\in K\cap\partial\Omega_{\rho_{1}}.
\end{equation}

Now, set $\Omega_{\rho_{2}}=\left\{u\in E: \|u\|<\rho_{2}\right\}$. By {\rm (H4)}, for any $u\in K\cap\partial\Omega_{\rho_{2}}$, we can get
\begin{eqnarray*}
Au(\eta)&\geq&\frac{2\eta}{(2T-\alpha\eta^{2})-\beta(\alpha\eta^{2}-2\eta+2T)}\int_{\eta}^{T}(T-s)a(s)f(u(s))ds \\
&\geq&\frac{2\eta}{(2T-\alpha\eta^{2})-\beta(\alpha\eta^{2}-2\eta+2T)}\int_{\eta}^{T}(T-s)a(s)M_{2}\rho_{2}ds \\
&\geq&\rho_{2}\frac{2\eta M_{2}\gamma}{(2T-\alpha\eta^{2})-\beta(\alpha\eta^{2}-2\eta+2T)}\int_{\eta}^{T}(T-s)a(s)ds \\
&=&\rho_{2}M_{2}\Lambda_{2}^{-1}\\
&\geq&\rho_{2}=\|u\|,
\end{eqnarray*}
which implies
\begin{equation}\label{eq-34}
\|Au\|\geq\|u\|,\  \text{for} \ u\in K\cap\partial\Omega_{\rho_{2}}.
\end{equation}
Hence, since $\rho_{1}<\rho_{2}$ and from \eqref{eq-33} and \eqref{eq-34}, it follows from Theorem \ref{theo 1.2} that
$A$ has a fixed point $u$ in $K\cap (\overline{\Omega}_{\rho_{2}}\backslash \Omega _{\rho_{1}})$. Moreover, it is a positive solution
of the BVP \eqref{eq-2}, \eqref{eq-3} and
\begin{equation*}\label{eq-35}
\rho_{1}<\|u\|<\rho_{2}.
\end{equation*}
The proof is therefore complete.
\end{proof}
\begin{corollary}\label{cor 4.2}
Assume that the following assumptions hold.
\begin{itemize}
\item[(H5)] $f_{0}=\alpha_{1}\in
\left[0,\theta_{1}\Lambda_{1}\right)$, where
$\theta_{1}\in\left(0,1\right]$. \item[(H6)]
$f_{\infty}=\beta_{1}\in
\left(\frac{\theta_{2}}{\gamma}\Lambda_{2},\infty\right)$, where
$\theta_{2}\geq1$.
\end{itemize}
Then, the BVP \eqref{eq-2}, \eqref{eq-3} has at least one positive solution.
\end{corollary}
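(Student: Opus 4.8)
The plan is to deduce Corollary \ref{cor 4.2} directly from Theorem \ref{theo 4.1}: from the limit hypotheses (H5) and (H6) I will manufacture a pair of radii $\rho_1<\rho_2$ for which (H2) and (H4) hold, and then quote Theorem \ref{theo 4.1}. No new fixed point computation is needed; the whole content is the standard $\varepsilon$-translation of the limits $f_0$ and $f_\infty$ in \eqref{eq-4} into pointwise inequalities, together with some bookkeeping of constants.

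First I would produce $\rho_1$ and $M_1$ from (H5). Since $\alpha_1=f_0<\theta_1\Lambda_1\le\Lambda_1$, I choose $\varepsilon_1>0$ small enough that $\alpha_1+\varepsilon_1\le\Lambda_1$. By the definition of $f_0$ there is $\rho_1>0$ with $f(u)\le(\alpha_1+\varepsilon_1)u$ for all $u\in(0,\rho_1]$; moreover $f(0)=0$, because $f$ is continuous and $f(u)/u$ has the finite limit $\alpha_1$ at $0^+$ (if $f(0)>0$ this quotient would blow up). Hence $f(u)\le(\alpha_1+\varepsilon_1)u\le(\alpha_1+\varepsilon_1)\rho_1$ for every $u\in[0,\rho_1]$. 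Putting $M_1:=\alpha_1+\varepsilon_1$, which lies in $(0,\Lambda_1]$ (it is strictly positive whether or not $\alpha_1=0$), we see that (H2) is satisfied with these $\rho_1$ and $M_1$.

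Next I would produce $\rho_2$ and $M_2$ from (H6). Since $\theta_2\ge1$ and $\gamma\in(0,1)$ we have $\gamma\beta_1>\theta_2\Lambda_2\ge\Lambda_2$, so I may pick $\varepsilon_2>0$ with $\gamma(\beta_1-\varepsilon_2)\ge\Lambda_2$. By the definition of $f_\infty$ there is $R>0$ such that $f(u)\ge(\beta_1-\varepsilon_2)u$ whenever $u\ge R$. Now fix any $\rho_2>\max\{\rho_1,\,R/\gamma\}$; then for $u\in[\gamma\rho_2,\rho_2]$ one has $u\ge\gamma\rho_2>R$, so $f(u)\ge(\beta_1-\varepsilon_2)u\ge\gamma(\beta_1-\varepsilon_2)\rho_2$. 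With $M_2:=\gamma(\beta_1-\varepsilon_2)\in[\Lambda_2,\infty)$, condition (H4) holds with this $\rho_2$ and $M_2$. Since by construction $\rho_1<\rho_2$, in particular $\rho_1\neq\rho_2$, Theorem \ref{theo 4.1} applies and delivers a positive solution $u$ of \eqref{eq-2}, \eqref{eq-3} with $\rho_1<\|u\|<\rho_2$.

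There is no genuine obstacle here; the only care needed is in the compatibility of the three choices: $\varepsilon_1$ must keep $M_1$ inside $(0,\Lambda_1]$, $\varepsilon_2$ must keep $M_2\ge\Lambda_2$, and $\rho_2$ must be taken larger than the $\rho_1$ produced in the first step so that Theorem \ref{theo 4.1} is genuinely invokable. Each of these is possible precisely because the strict inequalities in (H5) and (H6) — $\alpha_1<\theta_1\Lambda_1$ and $\beta_1>(\theta_2/\gamma)\Lambda_2$ — leave room; the parameters $\theta_1,\theta_2$ play no essential role in the argument beyond guaranteeing that room. One could alternatively bypass Theorem \ref{theo 4.1} and re-run the two-sphere Krasnoselskii argument (Theorem \ref{theo 1.2}) on $K\cap(\overline\Omega_{\rho_2}\setminus\Omega_{\rho_1})$ directly, but reusing Theorem \ref{theo 4.1} is cleaner.
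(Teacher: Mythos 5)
Your proposal is correct and follows essentially the same route as the paper: both translate the limits in (H5) and (H6) via an $\epsilon$-argument into the pointwise conditions (H2) and (H4) for suitable radii $\rho_1<\rho_2$, and then invoke Theorem \ref{theo 4.1}. The only differences are cosmetic (the paper fixes $\epsilon$ so that $M_1=\theta_1\Lambda_1$ and $M_2=\theta_2\Lambda_2$ exactly, while you keep a general $\epsilon$; you also make explicit the harmless point that $f(0)=0$ so the bound extends to $u=0$, and you correctly take $\rho_1$ small where the paper's wording ``sufficiently large $\rho_1$'' is a slip).
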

\begin{proof}
In view of $f_{0}=\alpha_{1}\in \left[0,\theta_{1}\Lambda_{1}\right)$, for $\epsilon=\theta_{1}\Lambda_{1}-\alpha_{1}>0$, there exists
a sufficiently large $\rho_{1}>0$ such that
\begin{equation*}\label{eq-36}
f(u)\leq(\alpha_{1}+\epsilon)u=\theta_{1}\Lambda_{1} u\leq\theta_{1}\Lambda_{1}\rho_{1},\ \text{for}\ u\in\left(0,\rho_{1}\right].
\end{equation*}

Since $\theta_{1}\in\left(0,1\right]$, then
$\theta_{1}\Lambda_{1}\in\left(0,\Lambda_{1}\right]$. By the
inequality above, {\rm (H2)} is satisfied. Since
$f_{\infty}=\beta_{1}\in
\left(\frac{\theta_{2}}{\gamma}\Lambda_{2},\infty\right)$, for
$\epsilon=\beta_{1}-\frac{\theta_{2}}{\gamma}\Lambda_{2}>0$, there
exists a sufficiently large $\rho_{2}(>\rho_{1})$ such that
\begin{equation*}\label{eq-37}
\frac{f(u)}{u}\geq\beta_{1}-\epsilon=\frac{\theta_{2}}{\gamma}\Lambda_{2},\ \text{for}\ u\in\left[\gamma\rho_{2},\infty\right),
\end{equation*}
thus, when $u\in\left[\gamma\rho_{2},\rho_{2}\right]$, one has
\begin{equation*}\label{eq-38}
f(u)\geq\frac{\theta_{2}}{\gamma}\Lambda_{2}u\geq\theta_{2}\Lambda_{2}\rho_{2}.
\end{equation*}
Since $\theta_{2}\geq1$, $\theta_{2}\Lambda_{2}\in\left[\Lambda_{2},\infty\right)$, then from the above inequality, condition {\rm
(H4)} is satisfied.
Hence, from Theorem \ref{theo 4.1} , the desired result holds.
\end{proof}
\begin{corollary}\label{cor 4.3}
Assume that the following assumptions hold.
\begin{itemize}
\item[(H7)] $f_{0}=\alpha_{2}\in \left(\frac{\theta_{2}}{\gamma}\Lambda_{2},\infty\right)$, where $\theta_{2}\geq1$.
\item[(H8)] $f_{\infty}=\beta_{2}\in \left[0,\theta_{1}\Lambda_{1}\right)$, where $\theta_{1}\in\left(0,1\right]$.
\end{itemize}
Then, the BVP \eqref{eq-2}, \eqref{eq-3} has at least one positive solution.
\end{corollary}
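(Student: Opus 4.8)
The plan is to deduce Corollary~\ref{cor 4.3} from Theorem~\ref{theo 4.1} by showing that, under (H7) and (H8), both hypotheses (H2) and (H4) hold with a pair of radii satisfying $\rho_2<\rho_1$; the situation is the mirror image of Corollary~\ref{cor 4.2}, with the roles of $f_0$ and $f_\infty$ interchanged.

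First I would extract (H4) from (H7). Since $f_0=\alpha_2>\frac{\theta_2}{\gamma}\Lambda_2$, setting $\epsilon=\alpha_2-\frac{\theta_2}{\gamma}\Lambda_2>0$ produces a small $\rho_2>0$ with $f(u)/u\geq\alpha_2-\epsilon=\frac{\theta_2}{\gamma}\Lambda_2$ for all $u\in(0,\rho_2]$; hence for $u\in[\gamma\rho_2,\rho_2]$ one gets $f(u)\geq\frac{\theta_2}{\gamma}\Lambda_2\,\gamma\rho_2=\theta_2\Lambda_2\rho_2$. As $\theta_2\geq1$ we have $M_2:=\theta_2\Lambda_2\in[\Lambda_2,\infty)$, so (H4) holds with this $\rho_2$ and $M_2$. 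This is the routine half.

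The substantive part is extracting (H2) from (H8). From $f_\infty=\beta_2<\theta_1\Lambda_1$, with $\epsilon_1=\theta_1\Lambda_1-\beta_2>0$, there is $\bar\rho>0$ such that $f(u)\leq\theta_1\Lambda_1 u$ for $u\geq\bar\rho$. This controls $f$ only for large arguments, whereas (H2) demands the uniform bound $f(u)\leq M_1\rho_1$ on the whole interval $[0,\rho_1]$; to bridge this gap I would reproduce the bounded/unbounded dichotomy used in the proof of Theorem~\ref{theo 3.2}. If $f$ is unbounded, continuity of $f$ lets me choose $\rho_1>\max\{\bar\rho,\rho_2\}$ with $f(\rho_1)=\max_{u\in[0,\rho_1]}f(u)$, so that $f(u)\leq f(\rho_1)\leq\theta_1\Lambda_1\rho_1$ for $u\in[0,\rho_1]$. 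If $f$ is bounded, say $f\leq L$ on $[0,\infty)$, I take $\rho_1\geq\max\{L/(\theta_1\Lambda_1),\bar\rho,\rho_2+1\}$, so $f(u)\leq L\leq\theta_1\Lambda_1\rho_1$ for $u\in[0,\rho_1]$. In either case, because $\theta_1\in(0,1]$ gives $M_1:=\theta_1\Lambda_1\in(0,\Lambda_1]$, condition (H2) is satisfied with this $\rho_1$ and $M_1$, and by construction $\rho_1>\rho_2$.

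With (H2) and (H4) in hand and $\rho_1\neq\rho_2$, I would apply Theorem~\ref{theo 4.1} --- whose conclusion covers both orderings of the radii --- to obtain a positive solution $u$ of \eqref{eq-2}, \eqref{eq-3} with $\rho_2<\|u\|<\rho_1$. The main obstacle is precisely the step just described: converting the asymptotic smallness of $f$ at infinity encoded in (H8) into the uniform pointwise bound required by (H2), which the unbounded/bounded case split resolves; the remaining bookkeeping (choosing $\epsilon,\epsilon_1$ and matching $M_1,M_2$ to $\Lambda_1,\Lambda_2$) is routine.
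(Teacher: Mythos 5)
Your proposal is correct and follows essentially the same route as the paper's own proof: deriving (H4) from (H7) via the small-$\rho_2$ estimate $f(u)\geq\frac{\theta_{2}}{\gamma}\Lambda_{2}u\geq\theta_{2}\Lambda_{2}\rho_{2}$, deriving (H2) from (H8) via the same bounded/unbounded dichotomy, and then invoking Theorem~\ref{theo 4.1}. Your explicit insistence that $\rho_1>\rho_2$ in both cases is a slightly more careful piece of bookkeeping than the paper records, but it is not a substantive difference.
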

\begin{proof}
Since $f_{0}=\alpha_{2}\in \left(\frac{\theta_{2}}{\gamma}\Lambda_{2},\infty\right)$, for
$\epsilon=\alpha_{2}-\frac{\theta_{2}}{\gamma}\Lambda_{2}>0$, there exists a sufficiently
small $\rho_{2}>0$ such that
\begin{equation*}\label{eq-39}
\frac{f(u)}{u}\geq\alpha_{2}-\epsilon=\frac{\theta_{2}}{\gamma}\Lambda_{2},\ \text{for}\ u\in\left(0,\rho_{2}\right].
\end{equation*}
Thus, when $u\in\left[\gamma\rho_{2},\rho_{2}\right]$, one has
\begin{equation*}\label{eq-40}
f(u)\geq\frac{\theta_{2}}{\gamma}\Lambda_{2}u\geq\theta_{2}\Lambda_{2}\rho_{2}.
\end{equation*}
which yields the condition {\rm (H4)} of Theorem \ref{theo 3.2}.

In view of $f_{\infty}=\beta_{2}\in \left[0,\theta_{1}\Lambda_{1}\right)$, for $\epsilon=\theta_{1}\Lambda_{1}-\beta_{2}>0$, there
exists a sufficiently large $\rho_{0}(>\rho_{2})$ such that
\begin{equation}\label{eq-41}
\frac{f(u)}{u}\leq\beta_{2}+\epsilon=\theta_{1}\Lambda_{1},\ \text{for}\ u\in\left[\rho_{0},\infty\right).
\end{equation}
We consider the following two cases:

Case (i). Suppose that $f(u)$ is unbounded. Then from $f\in C([0,\infty),[0,\infty))$, we know that there
is $\rho_{1}>\rho_{0}$ such that
\begin{equation}\label{eq-42}
f(u)\leq f(\rho_{1}),   \ \ \text{for}\  u\in\left[0,\rho_{1}\right].
\end{equation}
Since $\rho_{1}>\rho_{0}$, then from \eqref{eq-41}, \eqref{eq-42}, one has
\begin{equation*}\label{eq-43}
f(u)\leq f(\rho_{1})\leq \theta_{1}\Lambda_{1}\rho_{1},   \ \ \text{for} \  u\in\left[0,\rho_{1}\right].
\end{equation*}
Since $\theta_{1}\in\left(0,1\right]$, then $\theta_{1}\Lambda_{1}\in\left(0,\Lambda_{1}\right]$. By the inequality above, {\rm (H2)}
is satisfied.

Case (ii). Suppose that $f(u)$ is bounded, say
\begin{equation}\label{eq-44}
f(u)\leq L, \ \  \text{for all} \ \ u\in \left[0,\infty\right)
\end{equation}

In this case, taking sufficiently large $\rho_{1}>\frac{L}{\theta_{1}\Lambda_{1}}$, then from \eqref{eq-44}, we know
\begin{equation*}\label{eq-45}
f(u)\leq L\leq\theta_{1}\Lambda_{1}\rho_{1}, \ \ \text{for} \  u\in\left[0,\rho_{1}\right].
\end{equation*}
Since $\theta_{1}\in\left(0,1\right]$, then $\theta_{1}\Lambda_{1}\in\left(0,\Lambda_{1}\right]$. By the inequality above, {\rm (H2)}
is satisfied.
Hence, from Theorem \ref{theo 4.1}, we get the conclusion of Corollary \ref{cor 4.3}.
\end{proof}

\begin{corollary}\label{cor 4.4}
Assume that the previous hypotheses {\rm (H2)}, {\rm (H6)} and {\rm (H7)} hold. Then, the BVP \eqref{eq-2}, \eqref{eq-3} has at least
two positive solutions $u_{1}$ and $u_{2}$ such that
\begin{equation*}\label{eq-46}
0<\|u_{1}\|<\rho_{1}<\|u_{2}\|.
\end{equation*}
\end{corollary}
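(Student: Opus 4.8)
The plan is to apply Krasnoselskii's theorem (Theorem~\ref{theo 1.2}) on two adjacent annuli sharing the sphere $\|u\|=\rho_1$ as their common interface, producing a fixed point of norm $\le\rho_1$ on the inner annulus and one of norm $\ge\rho_1$ on the outer annulus. First I would convert (H7) and (H6) into localized versions of (H4). From (H7), taking $\epsilon=\alpha_2-\frac{\theta_2}{\gamma}\Lambda_2>0$ there is $\rho_\star\in(0,\rho_1)$ with $f(u)\ge\frac{\theta_2}{\gamma}\Lambda_2\,u$ on $(0,\rho_\star]$, hence $f(u)\ge M_2\rho_\star$ for $u\in[\gamma\rho_\star,\rho_\star]$, where $M_2:=\theta_2\Lambda_2\in[\Lambda_2,\infty)$; from (H6), taking $\epsilon=\beta_1-\frac{\theta_2}{\gamma}\Lambda_2>0$ there is $R_0>0$ with $f(u)\ge\frac{\theta_2}{\gamma}\Lambda_2\,u$ for $u\ge R_0$, so any $\rho^\star>\max\{\rho_1,\,R_0/\gamma\}$ satisfies $f(u)\ge M_2\rho^\star$ for $u\in[\gamma\rho^\star,\rho^\star]$. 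These are exactly the extractions already performed in the proofs of Corollaries~\ref{cor 4.3} and \ref{cor 4.2}, and since $\rho_\star$ can be taken arbitrarily small and $\rho^\star$ arbitrarily large we may insist on $\rho_\star<\rho_1<\rho^\star$.

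Writing $\Omega_r=\{u\in E:\|u\|<r\}$, I would then reuse the three boundary estimates of Sections~3 and~4 verbatim. On $K\cap\partial\Omega_{\rho_\star}$, Lemma~\ref{lem 2.4} gives $u(s)\in[\gamma\rho_\star,\rho_\star]$ for $s\in[\eta,T]$, hence $f(u(s))\ge M_2\rho_\star$ there, and the lower bound on $Au(\eta)$ behind \eqref{eq-32} yields $\|Au\|\ge Au(\eta)\ge\rho_\star M_2\Lambda_2^{-1}\ge\rho_\star=\|u\|$. On $K\cap\partial\Omega_{\rho_1}$, (H2) together with the upper estimate for $Au(t)$ behind \eqref{eq-25}, \eqref{eq-33} gives $\|Au\|\le\rho_1 M_1\Lambda_1^{-1}\le\rho_1=\|u\|$. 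On $K\cap\partial\Omega_{\rho^\star}$, the same computation as on $\partial\Omega_{\rho_\star}$ (now $u(s)\in[\gamma\rho^\star,\rho^\star]\subset[R_0,\infty)$ for $s\in[\eta,T]$ by Lemma~\ref{lem 2.4}) gives $\|Au\|\ge\rho^\star M_2\Lambda_2^{-1}\ge\rho^\star=\|u\|$.

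Finally I would apply Theorem~\ref{theo 1.2} twice, recalling that $A:K\to K$ is completely continuous. On $K\cap(\overline{\Omega}_{\rho_1}\backslash\Omega_{\rho_\star})$ one has $\|Au\|\ge\|u\|$ on $\partial\Omega_{\rho_\star}$ and $\|Au\|\le\|u\|$ on $\partial\Omega_{\rho_1}$, i.e.\ alternative~(ii), giving a fixed point $u_1$ with $\rho_\star\le\|u_1\|\le\rho_1$. On $K\cap(\overline{\Omega}_{\rho^\star}\backslash\Omega_{\rho_1})$ one has $\|Au\|\le\|u\|$ on $\partial\Omega_{\rho_1}$ and $\|Au\|\ge\|u\|$ on $\partial\Omega_{\rho^\star}$, i.e.\ alternative~(i), giving a fixed point $u_2$ with $\rho_1\le\|u_2\|\le\rho^\star$. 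By Lemma~\ref{lem 2.2} both are positive solutions of \eqref{eq-2}, \eqref{eq-3}, and since $\rho_\star<\rho_1<\rho^\star$ we read off $0<\|u_1\|<\rho_1<\|u_2\|$.

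I do not expect a genuine obstacle: this corollary merely stitches together the ``$f_0$ (resp.\ $f_\infty$) is large'' mechanism of Theorems~\ref{theo 3.1}--\ref{theo 3.2} with the (H2) mechanism of Theorem~\ref{theo 4.1}. The points needing care are ordering the auxiliary radii strictly as $\rho_\star<\rho_1<\rho^\star$ (possible by the freedom in (H6) and (H7)), checking via Lemma~\ref{lem 2.4} that the argument of $f$ remains in the interval where the relevant one-sided bound on $f$ holds when estimating $Au(\eta)$ on $\partial\Omega_{\rho_\star}$ and $\partial\Omega_{\rho^\star}$, and picking the correct alternative of Theorem~\ref{theo 1.2}---(ii) on the inner annulus, (i) on the outer one.
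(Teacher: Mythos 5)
Your proposal is correct and follows essentially the same route as the paper: extract from (H7) a small radius $\rho_{\star}<\rho_{1}$ and from (H6) a large radius $\rho^{\star}>\rho_{1}$ on which the (H4)-type lower bound $f(u)\geq M_{2}r$ holds for $u\in[\gamma r,r]$, then combine each with the (H2) upper bound at $\rho_{1}$ via Krasnoselskii's theorem on the two adjacent annuli. The only cosmetic difference is that the paper packages the two annulus arguments as two invocations of Theorem \ref{theo 4.1} rather than unfolding the boundary estimates, and it shares with your write-up the same unremarked passage from the non-strict conclusion $\rho_{\star}\leq\|u_{1}\|\leq\rho_{1}\leq\|u_{2}\|\leq\rho^{\star}$ to the strict inequalities in the statement.
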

\begin{proof}
From {\rm (H6)} and the proof of Corollary \ref{cor 4.2}, we know that there exists a sufficiently large $\rho_{2}>\rho_{1}$,  such
that
\begin{equation*}\label{eq-47}
f(u)\geq\theta_{2}\Lambda_{2}\rho_{2}=M_{2}\rho_{2},   \ \ \text{for}\  u\in\left[\gamma\rho_{2},\rho_{2}\right],
\end{equation*}
where $M_{2}=\theta_{2}\Lambda_{2}\in \left[\Lambda_{2},\infty\right)$.

In view of {\rm (H7)} and the proof of Corollary \ref{cor 4.3}, we see that there exists a sufficiently small
$\rho_{2}^{\star}\in\left(0,\rho_{1}\right)$ such that
\begin{equation*}\label{eq-48}
f(u)\geq\theta_{2}\Lambda_{2}\rho_{2}^{\star}=M_{2}\rho_{2}^{\star},   \ \ \text{for}\
u\in\left[\gamma\rho_{2}^{\star},\rho_{2}^{\star}\right],
\end{equation*}
where $M_{2}=\theta_{2}\Lambda_{2}\in \left[\Lambda_{2},\infty\right)$.

Using this and {\rm (H2)}, we know by Theorem \ref{theo 4.1} that the BVP \eqref{eq-2}, \eqref{eq-3} has two positive
solutions $u_{1}$ and $u_{2}$ such that
\begin{equation*}\label{eq-49}
\rho_{2}^{\star}<\|u_{1}\|<\rho_{1}<\|u_{2}\|<\rho_{2}.
\end{equation*}
Thus, the proof is complete.
\end{proof}
\begin{corollary}\label{cor 4.5}
Assume that the previous hypotheses {\rm (H4)}, {\rm (H5)} and {\rm (H8)} hold. Then, the BVP \eqref{eq-2}, \eqref{eq-3} has at least
two positive solutions $u_{1}$ and $u_{2}$ such that
\begin{equation*}\label{eq-50}
0<\|u_{1}\|<\rho_{2}<\|u_{2}\|.
\end{equation*}
\end{corollary}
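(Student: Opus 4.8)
The plan is to mirror the structure of Corollary \ref{cor 4.4}, but with the roles of the small-radius and large-radius estimates switched so that the outcome reads $0<\|u_1\|<\rho_2<\|u_2\|$. The key observation is that Corollary \ref{cor 4.5} is really a two-sided application of Theorem \ref{theo 4.1}: one application on an inner annulus and one on an outer annulus, glued together at the common level $\rho_2$ coming from (H4). So the first thing I would do is fix $\rho_2>0$ and $M_2\in[\Lambda_2,\infty)$ as provided by (H4), which directly gives the lower estimate $\|Au\|\ge\|u\|$ on $K\cap\partial\Omega_{\rho_2}$ exactly as in the displays \eqref{eq-32} and \eqref{eq-34}.

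Next I would extract from (H5), namely $f_0=\alpha_1\in[0,\theta_1\Lambda_1)$ with $\theta_1\in(0,1]$, a small radius $\rho^{\star}\in(0,\rho_2)$ such that $f(u)\le\theta_1\Lambda_1\rho^{\star}\le\Lambda_1\rho^{\star}$ for $u\in(0,\rho^{\star}]$; this is precisely the computation appearing in the proof of Corollary \ref{cor 4.2} (choose $\epsilon=\theta_1\Lambda_1-\alpha_1>0$ and use $f_0=\lim_{u\to0^+}f(u)/u$), and it verifies hypothesis (H2) with $\rho_1$ replaced by $\rho^{\star}$ and $M_1$ replaced by $\theta_1\Lambda_1$. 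Applying the upper estimate of the operator just as in \eqref{eq-25}/\eqref{eq-33} then yields $\|Au\|\le\|u\|$ on $K\cap\partial\Omega_{\rho^{\star}}$. Together with the lower estimate on $\partial\Omega_{\rho_2}$, Theorem \ref{theo 1.2}(ii) (equivalently Theorem \ref{theo 4.1}) produces a fixed point $u_1\in K\cap(\overline{\Omega}_{\rho_2}\setminus\Omega_{\rho^{\star}})$ with $\rho^{\star}<\|u_1\|<\rho_2$, hence a positive solution.

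For the outer annulus I would use (H8), $f_\infty=\beta_2\in[0,\theta_1\Lambda_1)$ with $\theta_1\in(0,1]$, to produce a large radius $\rho^{\star\star}>\rho_2$ with $f(u)\le\theta_1\Lambda_1\rho^{\star\star}\le\Lambda_1\rho^{\star\star}$ for $u\in[0,\rho^{\star\star}]$. Here I would reproduce verbatim the two-case argument from the proof of Corollary \ref{cor 4.3}: for $\epsilon=\theta_1\Lambda_1-\beta_2>0$ fix $\rho_0>\rho_2$ with $f(u)/u\le\theta_1\Lambda_1$ for $u\ge\rho_0$, then in Case (i) ($f$ unbounded) pick $\rho^{\star\star}>\rho_0$ with $f(u)\le f(\rho^{\star\star})$ on $[0,\rho^{\star\star}]$ so that $f(u)\le f(\rho^{\star\star})\le\theta_1\Lambda_1\rho^{\star\star}$, and in Case (ii) ($f\le L$) take $\rho^{\star\star}\ge\max\{L/(\theta_1\Lambda_1),\rho_0\}$. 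Either way (H2) holds on $\partial\Omega_{\rho^{\star\star}}$, giving $\|Au\|\le\|u\|$ there; combined with the lower estimate on $\partial\Omega_{\rho_2}$, Theorem \ref{theo 4.1} yields a second fixed point $u_2\in K\cap(\overline{\Omega}_{\rho^{\star\star}}\setminus\Omega_{\rho_2})$ with $\rho_2<\|u_2\|<\rho^{\star\star}$. Since $\rho^{\star}<\rho_2<\rho^{\star\star}$ the two annuli meet only along $\partial\Omega_{\rho_2}$, so $u_1\ne u_2$ and we obtain $0<\|u_1\|<\rho_2<\|u_2\|$, as claimed.

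The only mildly delicate point — the ``main obstacle'' — is the same one handled in Corollary \ref{cor 4.3}: from $f_\infty<\theta_1\Lambda_1$ one only controls $f(u)$ for large $u$, so one cannot immediately assert a bound of the form $f(u)\le\Lambda_1\rho^{\star\star}$ on all of $[0,\rho^{\star\star}]$; the unbounded/bounded case split above is exactly what repairs this. Everything else is a direct transcription of the operator estimates already established in Theorems \ref{theo 3.1}, \ref{theo 3.2} and \ref{theo 4.1}, so no new computation is required.
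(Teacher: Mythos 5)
Your proposal is correct and takes essentially the same route as the paper's own proof: (H5) supplies a small radius below $\rho_{2}$ at which (H2) holds, (H8) together with the bounded/unbounded case split from Corollary \ref{cor 4.3} supplies a large radius above $\rho_{2}$ at which (H2) holds, and Theorem \ref{theo 4.1} is applied on each of the two annuli sharing the level $\rho_{2}$ from (H4). The only cosmetic difference is notation ($\rho^{\star},\rho^{\star\star}$ versus the paper's $\rho_{1},\rho_{1}^{\star}$).
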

\begin{proof}
By {\rm (H5)} and the proof of Corollary \ref{cor 4.2}, we obtain that there exists sufficiently small
$\rho_{1}\in\left(0,\rho_{2}\right)$ such that
\begin{equation*}\label{eq-51}
f(u)\leq\theta_{1}\Lambda_{1}\rho_{1}=M_{1}\rho_{1}, \ \ \text{for}\  u\in\left(0,\rho_{1}\right],
\end{equation*}
where $M_{1}=\theta_{1}\Lambda_{1}\in\left(0,\Lambda_{1}\right]$.

In view of {\rm (H8)} and the proof of Corollary \ref{cor 4.3}, there exists a sufficiently large $\rho^{\star}_{1}>\rho_{2}$ such
that
\begin{equation*}\label{eq-52}
f(u)\leq\theta_{1}\Lambda_{1}\rho^{\star}_{1}=M_{1}\rho^{\star}_{1}, \ \ \text{for}\  u\in\left[0,\rho^{\star}_{1}\right],
\end{equation*}
where $M_{1}=\theta_{1}\Lambda_{1}\in\left(0,\Lambda_{1}\right]$.

Using this and {\rm (H4)}, we see by Theorem \ref{theo 4.1} that the BVP \eqref{eq-2}, \eqref{eq-3} has two positive solutions
$u_{1}$ and $u_{2}$ such that
\begin{equation*}\label{eq-53}
\rho_{1}<\|u_{1}\|<\rho_{2}<\|u_{2}\|<\rho^{\star}_{1}.
\end{equation*}
This completes the proof.
\end{proof}

\bigskip
\section{Illustration}
In this section, we give some examples about the theoretical results.

\begin{exmp}
Consider the boundary value problem

\begin{equation}\label{eq-54}
{u^{\prime \prime }}(t)+\frac{5}{32}(2-t)^{3}(\frac{u^{\frac{1}{2}}}{2}+\frac{u^{2}}{32})=0, \  \ 0<t<2,
\end{equation}

\begin{equation}\label{eq-55}
u(0)=\frac{1}{30}u(1), \  \ u(2)=2 \int_{0}^{1}u(s)ds.
\end{equation}
Set $\beta=1/30$, $\alpha=2$, $\eta=1$, $T=2$, $a(t)=\frac{5}{32}(2-t)^{3}$, $f(u)=\frac{u^{\frac{1}{2}}}{2}+\frac{u^{2}}{32}$. We can show that $0<\alpha=2<4=2T/{\eta^{2}}$, $0<\beta=1/30<1/2=(2T-\alpha \eta ^{2})/(\alpha \eta ^{2}-2\eta
+2T)$. Since $f_{0}=f_{\infty}=\infty$, then {\rm (H1)} holds. Again $\Lambda_{1}=((2T-\alpha\eta^{2})-\beta(\alpha\eta^{2}-2\eta+2T))/((2(\beta+1)+T^{-1}\beta\eta(\alpha\eta+2)+\alpha\beta
T)\int_{0}^{T}T(T-s)a(s)ds)=7/17$, because $f(u)$ is monotone increasing function for $u\geq0$, taking $\rho_{1}=4$, $M_{1}=3/8\in(0,\Lambda_{1}]$, then when $u\in[0,\rho_{1}]$, we get

\begin{equation*}\label{eq-56}
f(u)\leq f(4)=3/2=M_{1}\rho_{1},
\end{equation*}
which implies {\rm (H2)} holds. Hence, by Theorem \ref{theo 3.1}, the BVP \eqref{eq-54}, \eqref{eq-55} has at least two positive solutions $u_{1}$ and $u_{2}$ such that
\begin{equation*}\label{eq-57}
0<\|u_{1}\|<4<\|u_{2}\|.
\end{equation*}
\end{exmp}

\begin{exmp}
Consider the boundary value problem
\begin{equation}\label{eq-58}
{u^{\prime \prime }}(t)+8e^{6}u^{2}e^{-u}=0, \  \ 0<t<\frac{3}{4},
\end{equation}

\begin{equation}\label{eq-59}
u(0)=\frac{1}{10}u(\frac{1}{4}), \  \ u(\frac{3}{4})=20 \int_{0}^{\frac{1}{4}}u(s)ds.
\end{equation}
Set $\beta=1/10$, $\alpha=20$, $\eta=1/4$, $T=3/4$, $a(t)\equiv8$, $f(u)=e^{6}u^{2}e^{-u}$.
We can show that $0<\alpha=20<24=2T/{\eta^{2}}$, $0<\beta=1/10<1/9=(2T-\alpha \eta ^{2})/(\alpha \eta^{2}-2\eta+2T)$, $\gamma=\min\{\eta/T, (\alpha(\beta+1)\eta^{2})/2T, (\alpha(\beta+1)\eta(T-\eta))/(2T-\alpha(\beta+1)\eta^{2})\}=\min\{1/3, 11/12, 22\}=1/3$. Since $f_{0}=f_{\infty}=0$, then {\rm (H3)} holds. Again
$\Lambda_{2}=((2T-\alpha\eta^{2})-\beta(\alpha\eta^{2}-2\eta+2T))/(2\gamma\eta\int_{\eta}^{T}(T-s)a(s)ds)=3/20$,
because $f(u)$ is monotone decreasing function for $u\geq2$, taking $\rho_{2}=6$, $M_{2}=6\in[\Lambda_{2},\infty)$, then when $u\in[\gamma\rho_{2},\rho_{2} ]$, we obtain

\begin{equation*}\label{eq-60}
f(u)\geq f(6)=36=M_{2}\rho_{2},
\end{equation*}

which implies {\rm (H4)} holds. Hence, by Theorem \ref{theo 3.2}, the BVP \eqref{eq-58}, \eqref{eq-59} has at least two positive solutions $u_{1}$ and $u_{2}$ such that
\begin{equation*}\label{eq-61}
0<\|u_{1}\|<6<\|u_{2}\|.
\end{equation*}
\end{exmp}

\begin{exmp}
Consider the boundary value problem
\begin{equation}\label{eq-62}
{u^{\prime \prime }}(t)+\frac{aue^{2u}}{b+e^{u}+e^{2u}}=0, \  \ 0<t<1,
\end{equation}

\begin{equation}\label{eq-63}
u(0)=\frac{1}{2}u(\frac{1}{3}), \  \ u(1)=3 \int_{0}^{\frac{1}{3}}u(s)ds,
\end{equation}
where $a=183$, $b=637$. Set $\beta=1/2$, $\alpha=3$, $\eta=1/3$, $T=1$, $a(t)\equiv1$, $f(u)=(aue^{2u})/(b+e^{u}+e^{2u})$. We can show that $0<\alpha=3<18=2T/{\eta^{2}}$, $0<\beta=1/2<1=(2T-\alpha \eta ^{2})/(\alpha \eta^{2}-2\eta+2T)$. Since $\gamma=\min\{\eta/T, (\alpha(\beta+1)\eta^{2})/2T, (\alpha(\beta+1)\eta(T-\eta))/(2T-\alpha(\beta+1)\eta^{2})\}=\min\{1/3, 1/4, 2/3\}=1/4$, $\Lambda_{1}=((2T-\alpha\eta^{2})-\beta(\alpha\eta^{2}-2\eta+2T))/((2(\beta+1)+T^{-1}\beta\eta(\alpha\eta+2)+\alpha\beta
T)\int_{0}^{T}T(T-s)a(s)ds)=1/3$, $\Lambda_{2}=((2T-\alpha\eta^{2})-\beta(\alpha\eta^{2}-2\eta+2T))/(2\gamma\eta\int_{\eta}^{T}(T-s)a(s)ds)=45/2$, and $f_{0}=a/(b+2)=61/213$, $f_{\infty}=a=183$. Taking $\theta_{1}\in(61/71, 1]$, $\theta_{2}\in[1, 2]$, thus $f_{0}\in(0, \theta_{1}\Lambda_{1})$, $f_{\infty}\in((\theta_{2}/\gamma)\Lambda_{2}, \infty)$, which imply {\rm (H5)} and {\rm (H6)} hold. Therefore, by Corollary \ref{cor 4.2}, the BVP \eqref{eq-62}, \eqref{eq-63} has at least one positive solution.
\end{exmp}

\begin{exmp}
Consider the boundary value problem
\begin{equation}\label{eq-64}
{u^{\prime \prime }}(t)+\frac{6}{25}tu(1+\frac{\lambda}{1+u^{2}})=0, \  \ 0<t<1,
\end{equation}

\begin{equation}\label{eq-65}
u(0)=u(\frac{1}{2}), \  \ u(1)=\int_{0}^{\frac{1}{2}}u(s)ds,
\end{equation}
where $\lambda=799$. Set $\beta=1$, $\alpha=1$, $\eta=1/2$, $T=1$, $a(t)=\frac{6}{25}t$, $f(u)=u(1+\frac{\lambda}{1+u^{2}})$. We can show that $0<\alpha=1<8=2T/{\eta^{2}}$, $0<\beta=1<7/5=(2T-\alpha \eta ^{2})/(\alpha \eta^{2}-2\eta+2T)$. Since $\gamma=\min\{\eta/T, (\alpha(\beta+1)\eta^{2})/2T, (\alpha(\beta+1)\eta(T-\eta))/(2T-\alpha(\beta+1)\eta^{2})\}=\min\{1/2, 1/4, 1/3\}=1/4$, $\Lambda_{1}=((2T-\alpha\eta^{2})-\beta(\alpha\eta^{2}-2\eta+2T))/((2(\beta+1)+T^{-1}\beta\eta(\alpha\eta+2)+\alpha\beta
T)\int_{0}^{T}T(T-s)a(s)ds)=2$, $\Lambda_{2}=((2T-\alpha\eta^{2})-\beta(\alpha\eta^{2}-2\eta+2T))/(2\gamma\eta\int_{\eta}^{T}(T-s)a(s)ds)=100$, and $f_{0}=1+\lambda=800$, $f_{\infty}=1$. Taking $\theta_{1}\in(1/2, 1]$, $\theta_{2}\in[1, 2)$, thus $f_{0}\in((\frac{\theta_{2}}{\gamma})\Lambda_{2},\infty) $, $f_{\infty}\in(0, \theta_{1}\Lambda_{1})$, wich imply {\rm (H7)} and {\rm (H8)} hold. Therefore, by Corollary \ref{cor 4.3}, the BVP \eqref{eq-64}, \eqref{eq-65} has at least one positive solution.
\end{exmp}

\end{document}